\documentclass[12pt, leqno]{amsart}
\usepackage[active]{srcltx}
\usepackage{verbatim}
\usepackage{epsfig,graphicx,color,mathrsfs}
\usepackage{graphicx}
\usepackage{amsmath,amssymb,amsthm,amsfonts}
\usepackage{amssymb}
\usepackage[english]{babel}
\usepackage{appendix}

\usepackage[left=2.7cm,right=2.7cm,top=3cm,bottom=3cm]{geometry}




\newcommand{\R}{{\mathbb R}}

\newcommand{\rn}{{\mathbb{R}^N}}




\numberwithin{equation}{section}
\newtheorem{theorem}{Theorem}[section]
\newtheorem{proposition}[theorem]{Proposition}
\newtheorem{lemma}[theorem]{Lemma}

\newtheorem{definition}[theorem]{Definition}
\newtheorem{remark}[theorem]{Remark}
\theoremstyle{definition}


\newcommand{\brm}{\begin{remark}\rm}
\newcommand{\erm}{\end{remark}}
\newcommand{\brms}{\begin{remark}\rm}
\newcommand{\erms}{\end{remark}}
\newcommand{\bte}{\begin{theorem}}
\newcommand{\ete}{\end{theorem}}
\newcommand{\bpr}{\begin{proposition}}
\newcommand{\epr}{\end{proposition}}
\newcommand{\ble}{\begin{lemma}}
\newcommand{\ele}{\end{lemma}}
\newcommand{\beq}{\begin{equation}}
\newcommand{\eeq}{\end{equation}}
\newcommand{\bdm}{\begin{displaymath}}
\newcommand{\edm}{\end{displaymath}}
\numberwithin{equation}{section}

\newcommand{\bos}{\begin{remark}\rm}
\newcommand{\eos}{\end{remark}}

\newcommand{\ben}{\begin{enumerate}}
\newcommand{\een}{\end{enumerate}}

\newcommand{\e }{\varepsilon }

\newcommand{\be}{\begin{equation}}
\newcommand{\ee}{\end{equation}}

\newcommand{\inn}{\text{  in   }}

\title[Moving plane method]{On the moving plane method for nonlocal problems in bounded domains}


\author[B. Barrios]{ Bego\~na Barrios $^+$}

\author[L.\ Montoro]{Luigi Montoro$^*$}

\author[B.\ Sciunzi]{Berardino Sciunzi$^*$}

\keywords{Fractional Laplacian, Hardy-Leray potential, qualitative properties, moving plane method.}

\thanks{\it 2010 Mathematics Subject
 Classification: 35R09, 34B10, 35B06, 35B51.}

\thanks{$^+$ Departamento de Matem\'aticas, Universidad Aut\'onoma
de Madrid. 28009. Spain.
E-mail: {\em bego.barrios@uam.es}}

\thanks{$^*$Dipartimento di Matematica e Informatica,
Universit\`a della Calabria,
Ponte Pietro Bucci 31B, I-87036 Arcavacata di Rende, Cosenza, Italy,
E-mail:  {\em montoro@mat.unical.it}, {\em sciunzi@mat.unical.it}}

\thanks{BB and LM were partially supported by project MTM2010-18128, MICINN}

\thanks{BS  was partially supported by ERC-2011-grant: \emph{Elliptic PDE's and symmetry of interfaces and layers for odd nonlinearities.}}

\thanks{LM and BS  were partially supported by PRIN-2011: {\em Variational and Topological Methods in the Study of Nonlinear Phenomena}}

\begin{document}
\begin{abstract}
We consider a nonlocal problem  involving the fractional laplacian and the Hardy potential, in  bounded smooth domains. Exploiting the moving plane method and some weak and strong comparison principles,
we deduce symmetry and monotonicity properties of positive solutions
under zero Dirichlet boundary conditions.
\end{abstract}

\maketitle
\tableofcontents

\medskip

\section{Introduction}\label{introdue}

{In recent years, considerable attention has been given to equations involving general integrodifferential
operators, especially, those with the fractional Laplacian operator. This motivation coming from the fact that these nonlocal structures has connection with many real world phenomena. Indeed, non local operators naturally appear in elasticity problems \cite{signorini}, thin obstacle problem \cite{Caf79}, phase transition \cite{AB98, CSM05, SV08b}, flames propagation \cite{crs}, crystal dislocation \cite{fdpv, toland}, stratified materials \cite{savin_vald}, quasi-geostrophic flows \cite{caf_vasseur} and others. Since these operators are also related to L\'{e}vy processes and have a lot of applications to mathematical finance, they have been also studied from a probabilistic point of view (see for example \cite{kass3, bogdan_by, kass1, ito, enrico}).}  We refer the readers to, for instance,  \cite{barrios2, barrios3, barrios4, barrios5, BjCaffFigalli, CafFigalli, CScpam, CS2011, CS2012, guida, quim, joaquim, serv, serv3} where existence of solutions and/or regularity of solutions  are studied for some nonlocal problems.

{In this paper we focus our attention in the following problem}
\begin{equation}\label{Eq:P}
\left\{
\begin{array}{rcl}
(-\Delta)^s u&=&\frac{g(u)}{|x|^{2s}}+f(x,u)\quad\text{in}\,\, \Omega,\\
u&>&0\,\qquad\qquad\qquad\inn \Omega,\\
u&=&0\,\qquad\qquad\qquad\inn \rn\setminus\Omega,\\
\end{array}
\right.
\end{equation}
where $\Omega$ is a bounded smooth domain, $N>2s$ and the equation is understood in the weak energy sense (see Definition \ref{def:sol}) {and $(-\Delta)^s$ is the fractional Laplacian operator defined, up to a normalization factor by the Riesz potential as
$$(-\Delta)^s u(x):=
\int_{\mathbb{R}^N}\frac{2u(x)-u(x+y)-u(x-y)}{|y|^{N+2s}}\,dy,\quad x\in \mathbb{R}^N\,,
$$
where $0<s<1$ is a fix parameter (see \cite[Chapter 5]{Stein} or \cite{guida, PhS} for further details).} We assume that  $$0\in \Omega\,$$ and also that the nonlinearities
$$f(x,t):\Omega \times[0,\infty)\rightarrow \mathbb R\quad \text{and} \quad g(t): [0,\infty)\rightarrow\mathbb{R},$$ fulfill the following assumptions:
\begin{enumerate}
\item[$(H_1)$] $f(x,t)$ is a Carath\'{e}odory function which is locally Lipschitz continuous with respect to the second variable. Namely, for any $M>0$ given, it follows
\[
|f(x,t_1)-f(x,t_2)|\leq L_f(M)|t_1-t_2|,\quad x\in\Omega,\quad t_1,t_2\in[0\,,\,M].
\]
Furthermore
$g(t)$ is locally Lipschitz continuous namely, for any $M>0$ given, it follows
\[
|g(t_1)-g(t_2)|\leq L_g(M)|t_1-t_2|,\quad  t_1,t_2\in[0\,,\,M].
\]
\end{enumerate}
As a leading example we can consider $f(x,s)=a(x)f(s)$ with $a(\cdot)$ bounded and measurable and $f$ locally Lipschitz continuous and  $g(t)=\vartheta t^q$ with $\vartheta\geqslant0$ an $q\geqslant 1$.{We note here that, adapting to the nonlocal framework the ideas done in \cite[Theorem 1.2]{brezcabre}, if $f\geq 0$ and $g(u)\geq u^{q}, \, q>1$ it can be proved that the problem has not solution even in a more weaker sense than the one considered in Definition \ref{def:sol}.

\

In some  of our results we will consider the stronger assumption:

\

 \begin{enumerate}
\item[$(H_2)$] $f(x,t)$ is Holder continuous with respect to the $x$-variable, namely, for any $M>0$ and ${r}>0$ given, for some $\gamma\in(0,1)$ it follows that
\[
|f(x_1,t)-f(x_2,t)|\leq C(M,r)|x_1-x_2|^\gamma,\quad x_1,x_2\in\Omega\setminus B_{{r}}(0),\quad t\in[0\,,\,M].
\]
\end{enumerate}
 The aim of this work is to  prove symmetry and monotonicity properties of the solutions exploiting the \emph{moving plane method}. The moving plane method was brought to the attention of the PDE community by J. Serrin {(\cite{S})} and  a clever use of it goes back to the celebrated paper \cite{GNN}. The technique was refined in \cite{BN} and this is the approach that we use here. This  will also allows us to consider convex (not necessarily strictly convex domains).\\
\noindent The general statement is the following:\\

\noindent \emph{If the domain is convex and symmetric, then the solution inherits the symmetry of the domain and also
exhibits monotonicity properties.
}\\

 When performing the  moving planes technique in problems that involves local partial differential equations, the local properties of the differential operators are used in a crucial way. This causes that,  in the context of nonlocal operators, many difficulties arise, for example, because of the lack of general weak and strong comparison principles. Previous contribution devoted to symmetry results for equations involving the fractional Laplacian in $\mathbb{R}^{N}$ that use the moving plane method can be found, for instance, in \cite{CLO, CLO2, DMPS, FeWa, MC}. Other woks, in the nonlocal framework, that study the symmetry of solutions using another techniques are for example \cite{DPV, CSire, SV08b}.
\\

The analysis in our context is also more involved because of the presence of the Hardy Leray potential. In particular this causes that the solutions are not bounded (and not smooth) near the origin. Nevertheless, the case  $g(u)=0$
is also admissible in our results and in this case our effort is to carry out the moving plane procedure exploiting the weak formulation of the equation. This allows to consider issues where solutions are not smooth, namely not of class $\mathcal{C}^1$.

Our main result is the following:

\begin{theorem}\label{symmetryintro}
Let $u\in \mathcal{C}^0(\overline{\Omega}\setminus\{0\})$ be a weak solution to \eqref{Eq:P} and let $\Omega$ be  convex with respect to the $x_1$-direction  and symmetric w.r.t.
$$ T_0=\{x\in \mathbb{R}^N : x_1=0\}.$$
For $x=(x_1,x')$ let {us consider} $x_\lambda=(2\lambda-x_1,x')$ and assume that
{$$\mbox{$f(x,t)\leq f(x_\lambda,t)$ if $\lambda <0$, $x\in\Omega\cap\{x_1<\lambda\}$ and $t\in [0,\infty)$}$$
and
$$\mbox{$f(x,t)= f(x_\lambda,t)$ if $x\in\Omega\cap\{x_1<0\}$ and $t\in [0,\infty)$.}$$}

\noindent Let us also assume that either $(a)$ or $(b)$ are fulfilled, where:

\begin{itemize}
\item [$(a)$]  $(H_1)$  holds  and  $f(x,t)$  is  nondecreasing with respect to the variable $t$ for any $x\in \Omega$ and $g(t)$  is  nondecreasing with respect to the variable $t$;

\item[$(b)$]  $(H_1)$ and  $(H_2)$ hold.
\end{itemize}

\

\noindent Then $u$ is symmetric w.r.t. the $x_1$-variable and strictly increasing w.r.t. the $x_1$-direction for $x_1<0$.
Moreover, if  $\Omega$ is a ball, then $u$ is radial and strictly radially decreasing.
\end{theorem}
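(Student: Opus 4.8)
The plan is to carry out the moving plane method in the $x_1$-direction. For $\lambda < 0$ set $\Omega_\lambda = \Omega \cap \{x_1 < \lambda\}$, and for $x \in \Omega_\lambda$ let $x_\lambda = (2\lambda - x_1, x')$ be the reflection across the hyperplane $T_\lambda = \{x_1 = \lambda\}$. Define the reflected function $u_\lambda(x) := u(x_\lambda)$ and the difference $w_\lambda := u - u_\lambda$. By the convexity of $\Omega$ in the $x_1$-direction, the reflected domain $\Omega_\lambda^R := \{x_\lambda : x \in \Omega_\lambda\}$ is contained in $\Omega$, so $u_\lambda$ is well-defined and finite on $\Omega_\lambda$ (note $0 \notin \Omega_\lambda$ since $\lambda<0$, so the singularity of the Hardy term is not reflected into $\Omega_\lambda$ and $u_\lambda$ stays bounded there). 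The first step is to write the equation satisfied by $u_\lambda$: using the symmetry of $|y|^{-N-2s}$ under $y \mapsto -y$ one checks that $(-\Delta)^s u_\lambda(x) = (-\Delta)^s u(x_\lambda)$, so $u_\lambda$ solves $(-\Delta)^s u_\lambda = g(u_\lambda)/|x_\lambda|^{2s} + f(x_\lambda, u_\lambda)$ in $\Omega_\lambda$. Subtracting, $w_\lambda$ satisfies a linear nonlocal inequality on $\Omega_\lambda$ with $w_\lambda \geq 0$ outside $\Omega_\lambda$ (this last point uses that on $\R^N \setminus \Omega$ one has $u=0$ while $u_\lambda \geq 0$, and on $\Omega \setminus \Omega_\lambda$ the reflection lands deeper in $\Omega$ where comparison with the boundary values is favorable — this is the standard bookkeeping of the method, and here one must also control the contribution $w_\lambda^-$ away from $\Omega_\lambda$, which is supported where $u_\lambda > u = 0$, i.e. in $\Omega \setminus \Omega_\lambda^R$).

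Next, using assumption $(H_1)$ (local Lipschitz continuity of $f$ in $t$ and of $g$) together with the monotonicity hypotheses in case $(a)$, or the Hölder regularity $(H_2)$ and associated regularity/comparison estimates in case $(b)$, one estimates the right-hand side of the equation for $w_\lambda$: the terms $f(x,u) - f(x_\lambda, u_\lambda)$ and $g(u)/|x|^{2s} - g(u_\lambda)/|x_\lambda|^{2s}$ split into a "good sign" part (coming from $f(x,t) \leq f(x_\lambda,t)$ for $\lambda<0$, from monotonicity of $f,g$ in $t$ in case $(a)$, and from $|x| \geq |x_\lambda|$ on $\Omega_\lambda$ for $\lambda<0$ so that $1/|x|^{2s} \leq 1/|x_\lambda|^{2s}$) plus a part bounded by a potential times $w_\lambda^-$. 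This puts $w_\lambda^-$ in the setting of the weak comparison principle proved earlier in the paper: for $\lambda$ sufficiently close to $-\infty$ (equivalently, with $|\Omega_\lambda|$ small, or $\Omega_\lambda$ contained in a small strip), the principle forces $w_\lambda \geq 0$ in $\Omega_\lambda$, which starts the procedure. One should be careful to test the weak formulation with $w_\lambda^-$ (extended suitably by $0$) as an admissible test function and to handle the nonlocal bilinear form, which produces the crucial negative term $-\iint \frac{(w_\lambda^-(x) - w_\lambda^-(y))^2}{|x-y|^{N+2s}}$ plus a boundary interaction term of the right sign.

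Then one performs the continuation argument: let $\Lambda_0 = \sup\{\lambda < 0 : w_\mu \geq 0 \text{ in } \Omega_\mu \text{ for all } \mu \leq \lambda\}$. One shows $\Lambda_0 = 0$. Suppose $\Lambda_0 < 0$. By continuity $w_{\Lambda_0} \geq 0$ in $\Omega_{\Lambda_0}$, and $w_{\Lambda_0} \not\equiv 0$ (since $\Omega_{\Lambda_0}^R$ touches $\partial\Omega$ only at points where $u>0$ while $u=0$ there after reflection — or one uses the strict inequality in $f$ for $\lambda<0$); hence by the strong comparison principle from the earlier part of the paper, $w_{\Lambda_0} > 0$ in the interior of $\Omega_{\Lambda_0}$. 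Using this strict positivity, an absolute continuity / measure argument (the set where $w_{\Lambda_0}$ is "small" has small measure, plus the set $\Omega_{\Lambda_0 + \delta} \setminus \Omega_{\Lambda_0}$ is thin) lets one verify the hypotheses of the weak comparison principle again for $\lambda = \Lambda_0 + \delta$ with $\delta>0$ small, contradicting maximality. Therefore $w_\lambda \geq 0$ in $\Omega_\lambda$ for all $\lambda < 0$; letting $\lambda \uparrow 0$ gives $u(x) \geq u(0,x')$ reflected, i.e. $u \leq u_0$ on $\{x_1<0\}$. Since the symmetry hypothesis $f(x,t) = f(x_0,t)$ on $\{x_1<0\}$ makes the problem invariant under reflection across $T_0$, running the same argument from the right (or applying uniqueness of the reflection) yields the reverse inequality, so $u$ is symmetric about $T_0$; strict monotonicity $\partial_{x_1}$-wise for $x_1<0$ (in the weak/pointwise sense $u(x) < u(x_\lambda)$ for $\lambda<0$, $x \in \Omega_\lambda$) follows from the strict inequality $w_\lambda > 0$ for every $\lambda \in (-\infty,0)$ obtained via the strong comparison principle. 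Finally, if $\Omega = B_R(0)$ it is convex and symmetric in every direction, so the above applies along every direction through the origin, forcing $u$ radial and radially strictly decreasing.

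The main obstacle I expect is the justification of the weak and strong comparison principles in the presence of the Hardy–Leray potential — specifically, verifying that $w_\lambda^-$ is an admissible test function given that $u$ is unbounded near the origin, and controlling the sign and integrability of the Hardy term's contribution $g(u)/|x|^{2s} - g(u_\lambda)/|x_\lambda|^{2s}$ near $x=0$ (which lies in $\Omega \setminus \Omega_\lambda$, not in $\Omega_\lambda$, so one needs that the reflected singularity at $0_\lambda$ does not spoil the estimate). This is precisely where one leans on the comparison principles stated earlier in the paper, and on the fact that for $\lambda<0$ the strip $\Omega_\lambda$ stays a positive distance from the origin.
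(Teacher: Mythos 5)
Your overall strategy is the same as the paper's (start the plane with the weak comparison principle in small domains, continue via a measure/compactness argument that re-applies it on $\Omega_{\bar\lambda+\e}$ minus a compact set where the difference is uniformly positive, get strictness from a strong comparison step, then symmetrize by running the method from both sides and, for the ball, in every direction). However, there is a genuine gap in your treatment of case $(b)$. Your strictness step ("hence by the strong comparison principle \dots $w_{\Lambda_0}>0$") relies on the difference $u_{\bar\lambda}-u$ satisfying $(-\Delta)^s(u_{\bar\lambda}-u)\geq 0$ in $\Omega_{\bar\lambda}$, and this differential inequality is only available when $f(x,\cdot)$ and $g$ are nondecreasing in $t$, i.e.\ in case $(a)$: without monotonicity the zeroth-order terms $f(x,u_{\bar\lambda})-f(x,u)$ and $g(u_{\bar\lambda})-g(u)$ have no sign, and the strong comparison principle of the paper (Proposition \ref{proconfronto}) admits no zeroth-order coefficient, so it simply does not apply. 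The phrase "the H\"older regularity $(H_2)$ and associated regularity/comparison estimates in case $(b)$" does not identify a mechanism. The paper's case $(b)$ argument is genuinely different: using $(H_2)$ and the bootstrap regularity (Proposition \ref{pr:regularity}) one writes the equation pointwise at a contact point $\bar x$ with $u(\bar x)=u_{\bar\lambda}(\bar x)$, and computes $(-\Delta)^s u_{\bar\lambda}(\bar x)-(-\Delta)^s u(\bar x)$ directly from the kernel representation; the oddness of $u_{\bar\lambda}-u$ across $T_{\bar\lambda}$ and the strict kernel inequality $|\bar x-y|<|\bar x-y_{\bar\lambda}|$ on $\Sigma_{\bar\lambda}$ make this quantity strictly negative unless $u\equiv u_{\bar\lambda}$, contradicting the sign obtained from the equation (using $f(x,t)\leq f(x_\lambda,t)$ and $|x_\lambda|\leq|x|$). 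You need this, or a substitute for it, to close case $(b)$, both for the contradiction at $\bar\lambda$ and for the final strict monotonicity.

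A secondary inaccuracy: your parenthetical claim that "the singularity of the Hardy term is not reflected into $\Omega_\lambda$ and $u_\lambda$ stays bounded there" is false. The reflected origin $0_\lambda=(2\lambda,0')$ has first coordinate $2\lambda<\lambda$, so it lies in $\Sigma_\lambda$ and, once $\lambda$ is not too negative, inside $\Omega_\lambda$; hence $u_\lambda$ is in general unbounded in $\Omega_\lambda$. This is precisely why the paper's weak comparison principle only requires $u\in L^\infty(\Omega_\lambda^\nu)$ (the bound on $v=u_\lambda$ is needed only on the set where $u>v$), why the strong comparison step is performed in $\Omega_{\bar\lambda}\setminus\{0_{\bar\lambda}\}$, and why the compact set $K$ in the continuation argument excludes a small ball $B_{4\tau}(0_{\bar\lambda})$, with that ball's measure absorbed into the smallness budget $\bar\delta$. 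Your argument would need the same bookkeeping; as written, the claim that $u_\lambda$ is bounded on $\Omega_\lambda$ would be used exactly where it fails.
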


The fact that we need to assume some monotonicity and symmetry properties of the nonlinearity is natural. In fact it is easy to see that, if the right hand side in our problem is not symmetric, then  the solution cannot inherit the symmetry of the domain since we know that, for instance, $(-\Delta)^{s}|x|^{-\beta}= C(N,s,\beta)|x|^{-\beta-2s},\, \beta>0$. Furthermore, also the monotonicity in variable $x$ is necessary. In the local framework, this can be deduced considering e.g. the Hen\"{o}n equation for which non-radial solutions do exist. \\

A particular but relevant  example for which Theorem \ref{symmetryintro} applies is the following 
\begin{equation}\label{Eq:P222}
\left\{
\begin{array}{rcl}
(-\Delta)^s u&=&\frac{u}{|x|^{2s}}+f(u)\quad\text{in}\,\, \Omega,\\
u&>&0\,\qquad\qquad\qquad\inn \Omega,\\
u&=&0\,\qquad\qquad\qquad\inn \rn\setminus\Omega,\\
\end{array}
\right.
\end{equation}
with $f$ locally Lipschitz continuous with critical or sub-critical growth. It is easy to check that all our assumptions are fulfilled in this case. Furthermore, the case $g(\cdot)=0$ is also admissible in our result and we have in this case the following:

\begin{theorem}\label{symmetryintrokfhgkdhk}
Let $u\in \mathcal{C}^0(\overline{\Omega})$ be a weak solution to
 \begin{equation}\label{Eq:P333}
\left\{
\begin{array}{rcl}
(-\Delta)^s u&=&f(x,u)\qquad\qquad\text{in}\,\, \Omega,\\
u&>&0\,\qquad\qquad\qquad\inn \Omega,\\
u&=&0\,\qquad\qquad\qquad\inn \rn\setminus\Omega\\
\end{array}
\right.
\end{equation}
 and let $\Omega$ be  convex with respect to the $x_1$-direction  and symmetric w.r.t.
$$ T_0=\{x\in \mathbb{R}^N : x_1=0\}.$$
For $x=(x_1,x')$ let {us consider} $x_\lambda=(2\lambda-x_1,x')$ and assume that
{$$\mbox{$f(x,t)\leq f(x_\lambda,t)$ if $\lambda <0$, $x\in\Omega\cap\{x_1<\lambda\}$ and $t\in [0,\infty)$}$$
and
$$\mbox{$f(x,t)= f(x_\lambda,t)$ if $x\in\Omega\cap\{x_1<0\}$ and $t\in [0,\infty)$.}$$}

\noindent Let us also assume that either $(a)$ or $(b)$ are fulfilled, where:

\begin{itemize}
\item [$(a)$]  $(H_1)$  holds (with $g=0$)  and  $f(x,t)$  is  nondecreasing with respect to the variable $t$ for any $x\in \Omega$;

\item[$(b)$]  $(H_1)$ and  $(H_2)$ hold (with $g=0$).
\end{itemize}

\

\noindent Then $u$ is symmetric w.r.t. the $x_1$-variable and strictly increasing w.r.t. the $x_1$-direction for $x_1<0$.
Moreover, if  $\Omega$ is a ball, then $u$ is radial and strictly radially decreasing.
\end{theorem}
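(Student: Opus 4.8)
We first observe that \eqref{Eq:P333} is the case $g\equiv 0$ of \eqref{Eq:P}, so Theorem \ref{symmetryintrokfhgkdhk} is essentially the instance $g\equiv 0$ of Theorem \ref{symmetryintro}; the plan is to reproduce, in simplified form, the proof of that theorem, the simplification being that here $u$ is continuous (and, under $(H_2)$, locally regular) up to the origin, so that no delicate analysis near $x=0$ is needed. We run the moving plane method in the $x_1$-direction. For $\lambda\le 0$ set $T_\lambda=\{x_1=\lambda\}$, $\Omega_\lambda=\Omega\cap\{x_1<\lambda\}$, let $x_\lambda=(2\lambda-x_1,x')$ be the reflection through $T_\lambda$, and put $u_\lambda(x):=u(x_\lambda)$, $w_\lambda:=u-u_\lambda$. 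By convexity and symmetry of $\Omega$, $x_\lambda\in\Omega$ whenever $x\in\Omega_\lambda$ and $\lambda\le 0$; since $(-\Delta)^s$ commutes with reflections, $(-\Delta)^s u_\lambda(x)=\big((-\Delta)^s u\big)(x_\lambda)=f(x_\lambda,u_\lambda(x))$ on $\Omega_\lambda$, so, weakly in $\Omega_\lambda$,
\[
(-\Delta)^s w_\lambda=\big(f(x,u)-f(x,u_\lambda)\big)+\big(f(x,u_\lambda)-f(x_\lambda,u_\lambda)\big)=:c_\lambda(x)\,w_\lambda+h_\lambda(x),
\]
where $\|c_\lambda\|_{L^\infty(\Omega_\lambda)}\le L_f(M)$, $M:=\|u\|_{L^\infty(\Omega)}$, by $(H_1)$, and $h_\lambda\le 0$ for $\lambda<0$ by the hypothesis $f(x,t)\le f(x_\lambda,t)$. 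Moreover $w_\lambda$ is antisymmetric with respect to $T_\lambda$ and $w_\lambda\le 0$ on $\{x_1<\lambda\}\setminus\Omega_\lambda=\{x_1<\lambda\}\setminus\Omega$ (there $u=0$ and $u_\lambda\ge 0$).

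The two analytic tools needed, to be established beforehand in the nonlocal setting, are a \emph{weak comparison principle in domains of small measure} (if $w_\lambda$ is as above, antisymmetric, with $w_\lambda\le 0$ on $\{x_1<\lambda\}\setminus\Omega_\lambda$, then $w_\lambda\le 0$ in $\Omega_\lambda$ provided $|\Omega_\lambda|$ is small) and a \emph{strong comparison principle} (if in addition $w_\lambda\le 0$ and $w_\lambda\not\equiv 0$ in $\Omega_\lambda$, then $w_\lambda<0$ in its interior). I would prove the weak principle by testing with $w_\lambda^+$, using the antisymmetry of $w_\lambda$ to recast the resulting bilinear form as an integral over $\{x_1<\lambda\}$ with a nonnegative kernel, together with the pointwise bound $(w_\lambda(x)-w_\lambda(y))(w_\lambda^+(x)-w_\lambda^+(y))\ge (w_\lambda^+(x)-w_\lambda^+(y))^2$ and the fractional Sobolev and Hölder inequalities; this yields $\|w_\lambda^+\|_{\dot H^s}^2\le C\,L_f(M)\,|\Omega_\lambda|^{2s/N}\,\|w_\lambda^+\|_{\dot H^s}^2$, hence $w_\lambda^+\equiv 0$ once $|\Omega_\lambda|$ is small.

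Granting these, the procedure is the usual one. Since $\Omega_\lambda$ has small measure for $\lambda$ near $a_1:=\inf_{x\in\Omega}x_1$, we get $w_\lambda\le 0$ in $\Omega_\lambda$ there; let $\Lambda_0:=\sup\{\lambda\in(a_1,0]:\,w_\mu\le 0\text{ in }\Omega_\mu\ \forall\,\mu\le\lambda\}$, so that $w_{\Lambda_0}\le 0$ in $\Omega_{\Lambda_0}$ by continuity. If $\Lambda_0<0$, then $w_{\Lambda_0}\not\equiv 0$ (otherwise $\Omega$ would be symmetric with respect to both $T_0$ and $T_{\Lambda_0}$, hence invariant under a nontrivial translation, which is impossible), so by the strong comparison principle $w_{\Lambda_0}<0$ in the interior; picking a compact $K\subset\Omega_{\Lambda_0}$ with $|\Omega_{\Lambda_0}\setminus K|$ small, for $\lambda$ slightly above $\Lambda_0$ one has $w_\lambda<0$ on $K$ by continuity and $w_\lambda\le 0$ on $\Omega_\lambda\setminus K$ by the weak comparison principle, i.e.\ $w_\lambda\le 0$ in $\Omega_\lambda$, contradicting the maximality of $\Lambda_0$. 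Hence $\Lambda_0=0$ and $u\le u_0$ in $\Omega_0$. Running the same argument from the right --- equivalently, applying it to $x\mapsto u(-x_1,x')$, which solves \eqref{Eq:P333} with the same $f$ since $f(x,t)=f(x_\lambda,t)$ for $x_1<0$ --- gives $u\ge u_0$ in $\Omega_0$, whence $u\equiv u_0$: $u$ is symmetric in $x_1$. For $\lambda<0$ we then have $w_\lambda\le 0$, $w_\lambda\not\equiv 0$, so $w_\lambda<0$ in the interior of $\Omega_\lambda$, which is the strict monotonicity of $u$ in $x_1$ for $x_1<\lambda$; letting $\lambda\uparrow 0$ gives it for $x_1<0$. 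If $\Omega$ is a ball, the assumptions on $f$ are invariant under the analogous reflection in every direction through the centre, so $u$ is symmetric and strictly monotone along each such direction, i.e.\ radial and strictly radially decreasing.

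The main obstacle is the proof of the two comparison principles in this low-regularity, nonlocal setting: $w_\lambda$ lies only in $H^s\cap\mathcal{C}^0$, so one must carefully justify that $w_\lambda^+$ is admissible as a test function and that the long-range tail is under control, and it is precisely the antisymmetry of $w_\lambda$ that renders the relevant bilinear form nonnegative on $\Omega_\lambda$. This is also where the dichotomy $(a)$/$(b)$ enters: if $f$ is nondecreasing in $t$ then $c_\lambda\ge 0$ and the whole argument can be performed at the level of weak (energy) solutions, while if $f$ is only Hölder in $x$ one first upgrades $u$ to a classical solution away from the origin via interior regularity for $(-\Delta)^s$ and then uses a Hopf-type lemma. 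Compared with Theorem \ref{symmetryintro}, everything here is lighter because, $g$ being $0$, the solution is bounded and regular also near $x=0$.
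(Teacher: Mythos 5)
Your outline follows essentially the same route as the paper: the result is the $g\equiv 0$ case of Theorem \ref{symmetry}, proved by moving planes in the $x_1$-direction, with the two pillars being a weak comparison principle in sets of small measure (the paper's Theorem \ref{wcpsmall}, proved exactly as you sketch: test with the positive part of $u-u_\lambda$ truncated to the half-space, use the antisymmetry of $u-u_\lambda$ and the kernel comparison $|x-y|\le |x-y_\lambda|$ to discard the cross terms, then Sobolev plus $|\widetilde D|^{2s/N}$ smallness) and a strong comparison step to push the plane (Propositions \ref{th:simmmmmmm} and \ref{th:simmmmmmmM}); the compact-set/continuity device you use at the critical value $\Lambda_0$ is the paper's as well. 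Two points, however, need attention. First, your justification that $w_{\Lambda_0}\not\equiv 0$ when $\Lambda_0<0$ (``otherwise $\Omega$ would be symmetric with respect to both $T_0$ and $T_{\Lambda_0}$'') does not follow: the identity $u\equiv u_{\Lambda_0}$ is only known on the cap $\Omega_{\Lambda_0}$, and this gives no symmetry of $\Omega$ about $T_{\Lambda_0}$. The correct (and standard) argument, which is the one the paper uses, is the Dirichlet condition: a point of $\partial\Omega$ with $x_1<\Lambda_0$ has $u=0$ there while its reflection lies in the interior of $\Omega$ (by convexity and symmetry of $\Omega$, since $\Lambda_0<0$), where $u>0$, contradicting $u\equiv u_{\Lambda_0}$ on the closure of the cap.

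Second, in case $(b)$, where $f$ is not monotone in $t$, the strong-comparison step is precisely where the paper does nontrivial work and your proposal only gestures at it (``a Hopf-type lemma''). The antisymmetric strong maximum principle of Proposition \ref{proconfronto} needs $(-\Delta)^s(u_\lambda-u)\ge 0$, which is no longer available because the linearized zeroth-order coefficient can change sign. The paper's substitute (Proposition \ref{th:simmmmmmmM}) is: use the bootstrap of Proposition \ref{pr:regularity} (which under $(H_2)$ gives $\mathcal{C}^{\alpha+2s}$ interior regularity, so the equation holds pointwise) to evaluate $(-\Delta)^s u_{\bar\lambda}(\bar x)-(-\Delta)^s u(\bar x)$ at a touching point $\bar x$ where $u(\bar x)=u_{\bar\lambda}(\bar x)$; the equations and the hypothesis $f(x,t)\le f(x_{\bar\lambda},t)$ give that this difference is $\ge 0$, while the integral representation, the antisymmetry of $u_{\bar\lambda}-u$ and $|\bar x-y|\le|\bar x-y_{\bar\lambda}|$ on $\Sigma_{\bar\lambda}$ force it to be $<0$ unless $u\equiv u_{\bar\lambda}$ (excluded by the Dirichlet condition), a contradiction. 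You should either carry out this touching-point computation or prove the antisymmetric strong maximum principle you invoke; as written, case $(b)$ is a genuine gap, while the rest of your argument (including case $(a)$, modulo citing the known antisymmetric strong comparison principle) matches the paper's proof.
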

Theorem \ref{symmetryintrokfhgkdhk} extends to the nonlocal context the classical results of \cite{BN,GNN}, that hold in the local case. Even in this case, namely when the Hardy potential is not considered, our results and their proofs are new. In fact, we perform the technique exploiting only the weak formulation of the equation. This allows to consider the case when the solution is merely continuous.\\

\noindent The proof of Theorem \ref{symmetryintro}  will be carried out via the moving plane method. To do this we need
to exploit weak and strong comparison principles. The weak comparison principle cannot hold in general and in fact we will prove and exploit a \emph{weak comparison principle in small domains} in Theorem \ref{wcpsmall}. {This is because, under the assumption $(a)$ a lack of regularity of the solutions, force us to use} a careful analysis. {Moreover} we need to have a precise control of the parameters involved in the \emph{weak comparison principle in small domains} so that the latter could be of use when performing the moving plane procedure. In particular we have to take into account the fact that solutions are unbounded at the origin.\\

\noindent The other important tool is the \emph{strong comparison principle}. When $f(x,t)$ and $g(t)$ are nondecreasing w.r.t. the $t$-variable, we succeed in exploiting earlier results in \cite{PhS} as slightly improved in \cite{DMPS}. If this is not the case, namely considering $(b)$ in Theorem \ref{symmetry}, we argue in a different way and write the equation pointwise  far from the origin. To do this we need some  regularity information more, that will be deduced by the assumption $(H_2)$ {(see Proposition \ref{pr:regularity})}.\\

\noindent The paper is organized as follows: in Section 2, an introduction of the necessary functional framework is presented, as well as the type of solution we will work with and an interior regularity result. Section 3 is devoted to prove the weak and strong comparison principle. These results are the fundamental key to apply, in Section 4, the moving plane method to obtain the symmetry of the solutions.

\section{Notations and Preliminary Results}

\noindent Let us recall that, given  a function $u$ in the Schwartz's class $\mathcal{S}(\mathbb{R}^{N})$ we define for $0<s<1$, the fractional Laplacian as
\begin{equation}\label{fourier1}
\widehat{(-\Delta)^{s}}u(\xi)=|\xi|^{2s}\widehat{u}(\xi),\quad \xi\in\mathbb{R}^{N}.
\end{equation}
It is well known (see \cite{Landkof, Stein, enrico}) that this operator can be also represented, for suitable functions, as a principal value of the form
\begin{equation}\label{org1}
(-\Delta)^s u(x):=c_{N,s}\,{\rm P.V.}\int_{\mathbb{R}^N}\frac{u(x)-u(y)}{|x-y|^{N+2s}}\,dy
\end{equation}
where
\begin{equation}\label{constante}
c_{N,s}:=\left(\int_{\mathbb{R}^{N}}{\frac{1-\cos(\xi_1)}{|\xi|^{N+2s}}\, d\xi}\right)^{-1}=\frac{4^s\Gamma\left(\frac{N}{2}+s\right)}{-\pi^{\frac{N}{2}}\Gamma(-s)}>0,
\end{equation}
is a normalizing constant chosen to guarantee that \eqref{fourier1} is satisfied (see \cite{guida, PhS, enrico}).
From \eqref{org1} one can check that
\begin{equation}\label{decaimiento}
|(-\Delta)^{s}\phi(x)|\leq \frac{C}{1+|x|^{N+2s}},\quad\mbox{for every $\phi\in\mathcal{S}(\mathbb{R}^{N})$.}
\end{equation}
This motivates the introduction of the space
$$
\mathcal{L}^{s}(\mathbb{R}^{N}):=\{u:\mathbb{R}^{N}\to\mathbb{R}:\quad \int_{\mathbb{R}^{N}}{\frac{|u(x)|}{(1+|x|^{N+2s})}\, dx}<\infty\},
$$
endowed with the natural norm
$$\|u\|_{\mathcal{L}^{s}(\mathbb{R}^{N})}:=\int_{\mathbb{R}^{N}}{\frac{|u(x)|}{(1+|x|^{N+2s})}\, dx}.$$
Then, if $u\in\mathcal{L}^{s}(\mathbb{R}^{N})$ and $\phi\in\mathcal{S}(\mathbb{R}^{N})$, using \eqref{decaimiento}, we can formally define the duality product $\langle (-\Delta)^{s}u, \phi\rangle$ in the distributional sense as
$$\langle (-\Delta)^{s}u, \phi\rangle:=\int_{\mathbb{R}^{N}}{u(-\Delta)^s \phi\, dx}.$$

Along this work we will consider the Hilbert space
$$H^s_0(\Omega)=\{u\in H^s(\mathbb{R}^N)\,:\, u=0\,\,\text{in}\,\, \mathbb{R}^N\setminus\Omega\},$$
endowed with the norm
$$\|u\|^2_{H_0^s(\Omega)}=\frac{2}{c_{N,s}}\|(-\Delta)^{s/2}u\|^2_{L^2(\mathbb{R}^N)}.$$
Here $c_{N,s}$ is the normalizing constant given in \eqref{constante}.

In the following we will exploit the following well known Sobolev-type embedding Theorem
\begin{theorem}{\rm(See \cite[Theorem 7.58]{adams}, \cite[Theorem~6.5]{guida}, \cite{lieb, stein})}\label{embeding}
Let $0<s<1$ and $N>2s$. There exists a constant $S_{N,s}$ such that, for any measurable and compactly supported function
$u:\mathbb{R}^{N}\to\mathbb{R}$, we have
$$S_{N,s}\|u\|_{L^{2^{*}_s}(\mathbb{R}^{N})}^{2}\leq
\frac{2}{c_{N,s}}\|(-\Delta)^{s/2}u\|^2_{L^2(\mathbb{R}^N)},$$
where
\begin{equation}\label{critical_sobolev}
2^*_s=\frac{2N}{N-2s},
\end{equation}
is the Sobolev critical exponent.
\end{theorem}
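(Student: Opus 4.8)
The plan is to recognise the quantity $\frac{2}{c_{N,s}}\|(-\Delta)^{s/2}u\|_{L^2}^2$ as the homogeneous fractional seminorm of $u$ and then to deduce the inequality from the Hardy--Littlewood--Sobolev inequality for the Riesz potential of order $s$.

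\emph{Reduction to Schwartz functions.} We may assume $(-\Delta)^{s/2}u\in L^2(\mathbb R^N)$, i.e. $u\in\dot H^s(\mathbb R^N)$, the inequality being otherwise vacuous. By Plancherel and the scaling identity $\int_{\mathbb R^N}\frac{1-\cos(\xi\cdot z)}{|z|^{N+2s}}\,dz=c_{N,s}^{-1}|\xi|^{2s}$ (which is exactly \eqref{constante} after a rotation and a dilation) one has
\[
\frac{2}{c_{N,s}}\,\|(-\Delta)^{s/2}u\|_{L^2(\mathbb R^N)}^{2}=\frac{2}{c_{N,s}}\int_{\mathbb R^N}|\xi|^{2s}|\widehat u(\xi)|^{2}\,d\xi=\iint_{\mathbb R^{2N}}\frac{|u(x)-u(y)|^{2}}{|x-y|^{N+2s}}\,dx\,dy .
\]
Since $u$ is compactly supported, say $\operatorname{supp}u\subset B_R$, restricting the last double integral to $x\in B_R$, $y\notin B_{2R}$ (where $u(y)=0$ and $\tfrac12|y|\le|x-y|\le 2|y|$) yields a fractional Poincaré inequality $\|u\|_{L^2(\mathbb R^N)}\le C(N,s)\,R^{s}\,\|(-\Delta)^{s/2}u\|_{L^2}<\infty$. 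Hence $u\in L^2$ with compact support; mollifying, $u_\varepsilon:=u*\rho_\varepsilon\in C_c^\infty(\mathbb R^N)\subset\mathcal S(\mathbb R^N)$, with $u_\varepsilon\to u$ in $L^2$ and (along a subsequence) a.e., while $[u_\varepsilon]_{\dot H^s}\le[u]_{\dot H^s}$. Thus it suffices to prove the inequality for $u\in\mathcal S(\mathbb R^N)$ and then pass to the limit using Fatou on the left-hand side.

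\emph{Riesz representation and Hardy--Littlewood--Sobolev.} Let $u\in\mathcal S(\mathbb R^N)$ and set $g:=(-\Delta)^{s/2}u\in L^2(\mathbb R^N)$, so that $\widehat u(\xi)=|\xi|^{-s}\widehat g(\xi)$. Since $0<s<N$ (because $0<2s<N$), the suitably normalized Riesz kernel $I_s(x)=\kappa_{N,s}|x|^{s-N}$ has Fourier transform $|\xi|^{-s}$, whence $u=I_s*g$, i.e. $u(x)=\kappa_{N,s}\int_{\mathbb R^N}|x-y|^{s-N}g(y)\,dy$. The Hardy--Littlewood--Sobolev inequality for the Riesz potential $I_\alpha$ with $\alpha=s$ gives $\|I_\alpha*g\|_{L^q(\mathbb R^N)}\le C(N,p,\alpha)\|g\|_{L^p(\mathbb R^N)}$ whenever $1<p<q<\infty$ and $\frac1q=\frac1p-\frac{\alpha}{N}$; taking $p=2$ forces $\frac1q=\frac12-\frac sN=\frac{N-2s}{2N}$, that is $q=\frac{2N}{N-2s}=2^*_s$, and the admissibility $1<2<2^*_s<\infty$ is precisely $0<2s<N$. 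Therefore $\|u\|_{L^{2^*_s}(\mathbb R^N)}=\|I_s*g\|_{L^{2^*_s}}\le C\|g\|_{L^2}=C\|(-\Delta)^{s/2}u\|_{L^2}$; squaring gives the stated inequality with $S_{N,s}:=\frac{2}{c_{N,s}C^{2}}>0$, and the reduction step above extends it to every compactly supported measurable $u$.

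\emph{Alternatives and the main difficulty.} The use of HLS can be avoided: one may instead argue as in \cite{guida}, decomposing a nonnegative $u$ into dyadic slabs $u\wedge 2^{k+1}-u\wedge 2^{k}$, bounding below the measure of each slab's super-level set by the Gagliardo integral restricted to it, and summing the resulting geometric series; or one may use the Caffarelli--Silvestre extension, realising $\frac{2}{c_{N,s}}\|(-\Delta)^{s/2}u\|_{L^2}^2$ as a constant times the degenerate Dirichlet energy $\int_{\mathbb R^{N+1}_+}t^{1-2s}|\nabla U|^2$ of the $s$-harmonic extension $U$ of $u$ and then invoking the weighted trace--Sobolev inequality on $\partial\mathbb R^{N+1}_+=\mathbb R^N$. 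In each route the genuinely substantive ingredient is a Sobolev/HLS-type inequality in a fractional or Muckenhoupt-weighted setting; once that is granted, what remains is scaling bookkeeping that only pins down the exponent $q=2^*_s$. Hence the main obstacle, if one insists on a fully self-contained argument, is establishing HLS (equivalently, the weighted trace inequality); apart from that, the sole point requiring a little care is the density step, which rests on the fractional Poincaré inequality noted above.
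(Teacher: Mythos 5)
Your proof is correct. Note, however, that the paper offers no proof of Theorem \ref{embeding} at all: it is quoted as a known result with references to \cite{adams}, \cite{guida}, \cite{lieb} and \cite{stein}, so there is no internal argument to compare against. What you wrote is essentially the classical proof contained in those sources: the identity $\frac{2}{c_{N,s}}\|(-\Delta)^{s/2}u\|_{L^2(\mathbb{R}^N)}^2=\iint_{\mathbb{R}^{2N}}\frac{|u(x)-u(y)|^2}{|x-y|^{N+2s}}\,dx\,dy$ (which follows from the normalization \eqref{constante}), the representation $u=I_s\ast(-\Delta)^{s/2}u$, and the Hardy--Littlewood--Sobolev inequality with $p=2$, $\alpha=s$, forcing $q=2^*_s$ as in \eqref{critical_sobolev}; your preliminary reduction (the fractional Poincar\'e bound for compactly supported functions, mollification with $[u_\varepsilon]_{\dot H^s}\le[u]_{\dot H^s}$ by Minkowski's integral inequality, and Fatou) is sound and takes care of the ``measurable and compactly supported'' hypothesis. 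It is worth pointing out that the cited Theorem~6.5 of \cite{guida} deliberately avoids HLS and proves the inequality by the elementary dyadic level-set decomposition that you sketch as an alternative; that route is self-contained but non-sharp in the constant, whereas the HLS route (as in \cite{lieb}) yields the sharp constant at the price of assuming HLS itself.
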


Now we are in position to give the following:

\begin{definition}\label{def:sol}
We say that $u\in H^s_0(\Omega)$  is a weak solution to \eqref{Eq:P} if:
\[
\frac{g(u)}{|x|^{2s}}\in L^1(\Omega)\qquad\text{and}\qquad f(x,u)\in L^1(\Omega)
\]
and

\begin{equation}\nonumber
\begin{split}
\frac 12 c_{N,s}\int_{Q}&\frac{(u(x)-u(y))(\varphi(x)-\varphi(y))}{|x-y|^{N+2s}}\,dx\,dy\\
&=\int_{\Omega}\frac{g(u)}{|x|^{2s}}\varphi \,dx+\int_{\Omega}f(x,u)\varphi\,dx\quad\forall \varphi \in H^s_0(\Omega)\cap L^\infty(\Omega)\,.
\end{split}
\end{equation}
where $c_{N,s}$ has been defined in \eqref{constante} and $Q=\mathbb{R}^{2N} \setminus(\Omega^{c} \times \Omega^{c})$.
\end{definition}

\begin{remark}
We point out that, using  the fractional Hardy-Sobolev inequality (see \cite{beckner, fls, herb}),
it follows that
$\frac{g(u)}{|x|^{2s}}\in L^1(\Omega)$ whenever $g(\cdot)$ has linear or sublinear  growth at infinity, so that Definition \ref{def:sol} makes sense in this case. Actually, in this case, it follows that $\frac{g(u)\varphi}{|x|^{2s}}\in L^1(\Omega)$ for any (possibly unbounded) $\varphi\in H^s_0(\Omega)$. Therefore it is possible in this case to consider also unbounded test functions in the weak formulation of the equation.\\
\noindent Furthermore, by the Sobolev embedding, the case when $f$ has critical or sub-critical growth is also admissible even without the bounded condition in the family of test functions.
\end{remark}

Relating to some properties of the fractional Laplacian operator we present here a regularity result that will be needed later.
\begin{proposition}(Regularity-Bootstrap)\label{pr:regularity}
Let $u\in H^s_0(\Omega)$ be a weak solution to \eqref{Eq:P}. Let us consider $B_{r}(x_0)\subseteq\Omega$ for some $r>0$ and $x_0\in\Omega$ such that $0\notin B_{r}(x_0)$. If we denote by $h(x,u)$ the right hand side of the equation given in \eqref{Eq:P} and we assume that $u\in L^{\infty}(B_r(x_0))$, then
\begin{itemize}
\item[i)]$u\in\mathcal{C}^{\beta}\left(\overline{B_{\frac{r}{4}}(x_0)}\right)$ for every $0<\beta<2s$ if $f(x,u)$ and $g(u)$ satisfies the condition $(H_1)$.
\item[ii)]Moreover,  if the condition $(H_2)$ is also verified, $u\in \mathcal{C}^{\alpha+2s}\left(\overline{B_{\frac{r}{16}}(x_0)}\right)$ for some $0<\alpha<2s$, $\alpha\notin\mathbb{Z}$ and $\alpha+2s\notin\mathbb{Z},$ and, in fact,
\begin{equation}\label{regularidad}
\|u\|_{\mathcal{C}^{\alpha+2s}\left(\overline{B_{\frac{r}{16}}(x_0)}\right)}\leq c\left(\|u\|_{\mathcal{L}^{s}(\mathbb{R}^{N})}+\|u\|_{L^{\infty}(B_{r}(x_0))}+\|h\|_{L^{\infty}(B_{r}(x_0))}+\|h\|_{\mathcal{C}^{\alpha}\left(\overline{B_{\frac{r}{4}}(x_0)}\right)}\right),
\end{equation}
where $c$ is a positive constant that only depends on $N,s$ and $\alpha$.
\end{itemize}

\end{proposition}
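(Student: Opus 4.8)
The plan is to run a bootstrap argument entirely on the ball $B_r(x_0)$, which by hypothesis stays away from the origin, so that the Hardy term $g(u)/|x|^{2s}$ is harmless there. First I would split $u=u_1+u_2$ where $u_1$ solves $(-\Delta)^s u_1 = h(x,u)\chi_{B_r(x_0)}$ in $\mathbb{R}^N$ with $u_1=0$ in $\mathbb{R}^N\setminus B_r(x_0)$ (or an equivalent localization), and $u_2:=u-u_1$ is $s$-harmonic in $B_r(x_0)$; the point of the decomposition is that $u_2$ is automatically smooth inside $B_{r/2}(x_0)$ by interior regularity for $s$-harmonic functions, while $u_1$ carries the regularity of the right-hand side. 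Since $0\notin B_r(x_0)$, on this ball $|x|^{-2s}$ is bounded with bounded (indeed smooth) derivatives, so $h(\cdot,u)=g(u)|x|^{-2s}+f(\cdot,u)$ is as regular as the compositions $g(u)$ and $f(\cdot,u)$ permit. The standing assumption $u\in L^\infty(B_r(x_0))$ lets us work with the local bounds $L_f(M)$, $L_g(M)$ from $(H_1)$ with $M=\|u\|_{L^\infty(B_r(x_0))}$.

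For part (i): starting from $u\in L^\infty$, the right-hand side $h(\cdot,u)\in L^\infty(B_r(x_0))$ because $f$ is Carath\'eodory with a local Lipschitz (hence local sup) bound in $t$, $g$ is locally Lipschitz, and $|x|^{-2s}$ is bounded on $B_r(x_0)$. Then I invoke the interior Schauder/H\"older estimates for the fractional Laplacian (e.g.\ Caffarelli--Silvestre / Ros-Oton--Serra, or the references already used in the paper via \cite{PhS,DMPS}): an $L^\infty$ right-hand side and an $L^\infty$ (in fact $\mathcal{L}^s$-integrable, which holds since $u\in H^s_0(\Omega)$) solution give $u\in \mathcal{C}^\beta(\overline{B_{r/4}(x_0)})$ for every $\beta<2s$. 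A small care point is that one wants the estimate on a slightly shrunk ball and to control the ``tail'' of $u$ through $\|u\|_{\mathcal{L}^s(\mathbb{R}^N)}$, which is finite; this is standard and I would cite it rather than reprove it.

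For part (ii): now I upgrade using $(H_2)$. Since $u\in\mathcal{C}^\beta(\overline{B_{r/4}(x_0)})$ with $\beta<2s$, pick $\alpha<\min(\beta,\gamma)$ with $\alpha\notin\mathbb{Z}$, $\alpha+2s\notin\mathbb{Z}$. Then $f(x,u(x))$ is $\mathcal{C}^\alpha$ on $\overline{B_{r/4}(x_0)}$: the $x$-dependence is $\mathcal{C}^\gamma$ by $(H_2)$ (valid on $\Omega\setminus B_{\tilde r}(0)$, and $B_r(x_0)$ is such a set), and $t\mapsto f(x,t)$ is locally Lipschitz, so the composition with the $\mathcal{C}^\beta$ function $u$ is $\mathcal{C}^\alpha$. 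Likewise $g(u)\in\mathcal{C}^\alpha$ by the local Lipschitz bound on $g$ composed with $u\in\mathcal{C}^\beta$, and $|x|^{-2s}$ is smooth on $B_r(x_0)$; hence $h(\cdot,u)\in\mathcal{C}^\alpha(\overline{B_{r/4}(x_0)})$. Applying the interior Schauder estimate for $(-\Delta)^s$ (the one that takes a $\mathcal{C}^\alpha$ datum to a $\mathcal{C}^{\alpha+2s}$ solution, with the norm controlled by $\|u\|_{\mathcal{L}^s}+\|h\|_{L^\infty}+\|h\|_{\mathcal{C}^\alpha}$, again shrinking the ball to $B_{r/16}(x_0)$) yields $u\in\mathcal{C}^{\alpha+2s}(\overline{B_{r/16}(x_0)})$ together with the quantitative bound \eqref{regularidad}, after estimating $\|h(\cdot,u)\|_{\mathcal{C}^\alpha}$ by $\|u\|_{L^\infty}$ and $\|u\|_{\mathcal{C}^\beta}$ (which in turn is controlled by the previous step).

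The main obstacle is bookkeeping rather than depth: one must make sure the fractional interior Schauder estimates are applied on nested balls all contained in $B_r(x_0)$ (hence away from the origin), and that the tail contributions coming from the nonlocality of $(-\Delta)^s$ are absorbed into the $\|u\|_{\mathcal{L}^s(\mathbb{R}^N)}$ term — which is legitimate because $u\in H^s_0(\Omega)\subset\mathcal{L}^s(\mathbb{R}^N)$. The only genuinely ``new'' ingredient relative to the smooth setting is verifying that the composed nonlinearities $g(u)$ and $f(\cdot,u)$ inherit the right H\"older class from $u$, which is exactly where $(H_1)$ and $(H_2)$ enter; the exclusion of a neighborhood of the origin in $(H_2)$ is what makes this work, and it is compatible with the hypothesis $0\notin B_r(x_0)$.
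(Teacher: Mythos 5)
Your argument is correct and essentially coincides with the paper's proof: both bootstrap via the interior regularity estimates for the fractional Laplacian of Ros-Oton--Serra \cite{quim} (an $L^{\infty}$ right-hand side on a ball away from the origin, together with the finite tail $\|u\|_{\mathcal{L}^{s}(\mathbb{R}^{N})}$, gives $u\in\mathcal{C}^{\beta}$ for all $\beta<2s$; then $(H_1)$--$(H_2)$ and the smoothness of $|x|^{-2s}$ away from $0$ make $h(\cdot,u)\in\mathcal{C}^{\alpha}$ with $\alpha=\min\{\beta,\gamma\}$, whence the $\mathcal{C}^{\alpha+2s}$ bound \eqref{regularidad}). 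The preliminary splitting $u=u_1+u_2$ you sketch is not actually needed, since the cited interior estimates already absorb the nonlocal tail into $\|u\|_{\mathcal{L}^{s}(\mathbb{R}^{N})}$; the paper simply verifies $\|u\|_{\mathcal{L}^{s}(\mathbb{R}^{N})}<\infty$ via the Sobolev embedding and applies the two corollaries of \cite{quim} directly.
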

\begin{proof}
First of all we observe that, since $u\in L^{\infty}(B_r(x_0))$,  by hypothesis $(H_1)$, we obtain that
\begin{equation}\label{regularidad1}
h(x,u):=\frac{{g(u)}}{|x|^{2s}}+f(x,u){\in L^{\infty}(B_r(x_0)).}
\end{equation}
Then, by Theorem \ref{embeding} and by the fact that $\Omega$ is a bounded domain, we get that
\begin{eqnarray}
\|u\|_{\mathcal{L}^{s}(\mathbb{R}^{N})}&\leq& C\left(\|u\|_{L^{1}(B_1)}+\int_{\mathbb{R}^{N}\setminus B_1}\frac{|u(x)|}{|x|^{N+2s}}\, dx\right)\nonumber\\
&\leq&C\left(|\Omega|^{\frac{1}{(2^*_s)'}}\|u\|^{2^*_s}_{L^{2^*_s}(\Omega)}+\int_{\Omega\setminus B_1}\frac{|u(x)|}{|x|^{N+2s}}\, dx\right)\nonumber\\
&<&\infty.\label{regularidad2}
\end{eqnarray}
Therefore by \cite[Corollary 2,5]{quim} it follows that, for every $0<\beta<2s$,
\begin{equation}\label{regularidad3}
\|u\|_{\mathcal{C}^{\beta}\left(\overline{B_{\frac{r}{4}}(x_0)}\right)}\leq c\left(\|u\|_{\mathcal{L}^{s}(\mathbb{R}^{N})}+\|u\|_{L^{\infty}(B_{r}(x_0))}+\|h\|_{L^{\infty}(B_{r}(x_0))}\right),
\end{equation}
where $c$ is a positive constant that only depends on $N,s$ and $\beta$. Since, by $(H_1)$ and $(H_2)$, we know that {$h\in\mathcal{C}^{\alpha}\left(\overline{B_{\frac{r}{4}}(x_0)}\right)$}, with $\alpha=min\{\beta, \gamma\}$, where $\gamma$ was given in $(H_2)$, by \eqref{regularidad2} and  \eqref{regularidad3}, we obtain that \eqref{regularidad} follows by applying \cite[Corollary 2,4]{quim},
\end{proof}
\begin{remark}\label{rm:point}
Observe that as a consequence of the previous result, when $u$ is a weak solution, $(H_2)$ is assumed,  for any {$x\in\Omega\setminus \{0\}$}, if $u$ is bounded in a neighborhood of $x$, we can write in a pointwise way
$$c_{N,s} \,\,P.V. \int_{\mathbb{R}^N} \frac{u(x)-u(y)}{|x-y|^{N+2s}}dy= \frac{g(u)}{|x|^{2s}}+f(x,u).$$
\end{remark}

To finish this section we introduce some notation that we will need to state the principal results of the work.
If $\nu$ is a direction in $\mathbb{R}^N$, i.e. $\nu  \in \mathbb{R}^N$ and  $|\nu|=1$, and $\lambda$ is  a real number  we set
$$T_\lambda^\nu:=\{x\in \mathbb{R}^N:x\cdot\nu=\lambda\}.$$
Moreover, let us denote
$$
\Sigma_\lambda^\nu:=\{x\in \mathbb{R}^N:x\cdot \nu <\lambda\},\qquad \Omega_\lambda^\nu:=\Omega\cap\Sigma_\lambda^\nu
$$
$$
x_\lambda^\nu=R_\lambda^\nu(x):=x+2(\lambda -x\cdot\nu)\nu,
$$
(i.e. $R_\lambda^\nu $  is the reflection trough the hyperplane $T_\lambda^\nu$),
\begin{equation}\label{eq:sn33}
u_{\lambda }^\nu(x):=u(x_\lambda^\nu) \,
\end{equation}
and
$$
a(\nu):=\inf _{x\in\Omega}x\cdot \nu.
$$
When $\lambda >a(\nu)$, since $\Omega_\lambda^\nu$ is nonempty, we set
$$
(\Omega_\lambda^\nu)':=R_\lambda^\nu(\Omega_\lambda^\nu)
$$
and, finally, for $\lambda>a(\nu)$ we denote
\begin{equation}\label{eq:sn7}
\lambda_1(\nu):=\sup\{\lambda : (\Omega_\lambda^\nu)'\subset \Omega\}.\end{equation}

\noindent {\bf Notation.} Generic fixed and numerical constants will be denoted by
$C$ (with subscript in some case) and they will be allowed to vary within a single line or formula. By $|A|$ we will denote the Lebesgue measure of a measurable set $A$.

\section{Comparison principles}
Now we prove a weak comparison theorem in small domain, namely we have the following
\begin{theorem}[Weak comparison principle in small domains] \label{wcpsmall}
Let $\lambda <0$ and let us consider a set ${\widetilde D}$ such that ${\widetilde D}\subseteq  \Omega_\lambda^\nu \subset \Sigma_\lambda^\nu$.
Moreover let $u,v \in H^s(\mathbb{R}^N)$   weakly satisfying
\begin{equation}\label{ec_uu}
(-\Delta)^s u\leq\frac{g(u)}{|x|^{2s}}+f(x,u),\quad\text{in}\,\, {\widetilde D}
\end{equation}
\begin{equation}\label{ec_v}
(-\Delta)^s v\geq\frac{g(v)}{|x|^{2s}}+f(x,v),\quad\text{in}\,\, {\widetilde D},
\end{equation}
with $u\in L^\infty(\Omega_\lambda^\nu)$,  $f$ and $g$ satisfying $(H_1)$. Assume that $u\leq v$ in $\Sigma_{\lambda}^{\nu}\setminus \widetilde D$ and   $\big(u- v\big )$ is odd with respect to $T_\lambda^\nu=\partial\Sigma_\lambda^\nu$. Then there exists
\begin{equation}\label{controlling}
\delta=\delta\left(s,N,\lambda,L_f(\|u\|_{L^\infty(\Omega_\lambda^\nu)}),L_g(\|u\|_{L^\infty(\Omega_\lambda^\nu)})\right),
\end{equation}
 such that if  we assume that
$|{\widetilde D}|\leq \delta$,  then
$$ u\leq v\,\, \text{in}\,\, {\widetilde D}$$
and actually in $\Sigma^{\nu}_\lambda$.
\end{theorem}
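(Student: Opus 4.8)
The plan is to run a ``testing with the positive part'' argument, taking advantage of two features of the hypotheses: that $\widetilde D$ stays away from the origin precisely because $\lambda<0$ (so the Hardy weight $|x|^{-2s}$ is harmless on $\widetilde D$), and that $w:=u-v$ is antisymmetric across $T_\lambda^\nu$, which is what forces the favourable sign for the long-range contributions of the nonlocal operator. First I would set $\varphi:=(u-v)^+\chi_{\Sigma_\lambda^\nu}$. Since $u\le v$ in $\Sigma_\lambda^\nu\setminus\widetilde D$, this $\varphi$ is supported in $\widetilde D$; since $u\in L^\infty(\Omega_\lambda^\nu)$ and $u,v\ge 0$ on $\widetilde D$ (they appear in $g$ and $f$), we have $0\le\varphi\le\|u\|_{L^\infty(\Omega_\lambda^\nu)}=:M$ there; and, using that the odd function $w$ vanishes on $T_\lambda^\nu$ together with a fractional Hardy inequality relative to the hyperplane $T_\lambda^\nu$, one checks that $\varphi\in H^s(\mathbb{R}^N)$, so that $\varphi$ is an admissible nonnegative test function supported in $\widetilde D$. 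Using $\varphi$ in \eqref{ec_uu} and in \eqref{ec_v} and subtracting gives
\[
\frac{c_{N,s}}{2}\int_{\mathbb{R}^{2N}}\frac{\big(w(x)-w(y)\big)\big(\varphi(x)-\varphi(y)\big)}{|x-y|^{N+2s}}\,dx\,dy\ \le\ \int_{\widetilde D}\Big(\frac{g(u)-g(v)}{|x|^{2s}}+f(x,u)-f(x,v)\Big)\varphi\,dx.
\]

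For the left-hand side I would split $\mathbb{R}^{2N}$ into the four blocks cut out by $\Sigma_\lambda^\nu$ and its complement. On $\Sigma_\lambda^\nu\times\Sigma_\lambda^\nu$, where $\varphi=w^+$, I use $(a-b)(a^+-b^+)\ge(a^+-b^+)^2$; on the mixed blocks I change variables $y\mapsto y_\lambda^\nu$, use $w(y_\lambda^\nu)=-w(y)$ (hence $w^+(y_\lambda^\nu)=w^-(y)$), and the elementary inequality $|x-y|\le|x-y_\lambda^\nu|$, valid for $x,y\in\Sigma_\lambda^\nu$. After collecting terms, the extra contributions group into manifestly nonnegative quantities, so the left-hand side is bounded below by $\|(-\Delta)^{s/2}\varphi\|_{L^2(\mathbb{R}^N)}^2$, and then, by the Sobolev embedding (Theorem \ref{embeding}) and since $\varphi\equiv w^+$ on $\widetilde D$,
\[
\frac{c_{N,s}}{2}\int_{\mathbb{R}^{2N}}\frac{\big(w(x)-w(y)\big)\big(\varphi(x)-\varphi(y)\big)}{|x-y|^{N+2s}}\,dx\,dy\ \ge\ \frac{c_{N,s}\,S_{N,s}}{2}\,\|w^+\|_{L^{2^*_s}(\widetilde D)}^2.
\]

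For the right-hand side I would use that $\lambda<0$ forces $\widetilde D\subset\Sigma_\lambda^\nu\subset\{x:|x|\ge|\lambda|\}$, so $|x|^{-2s}\le|\lambda|^{-2s}$ on $\widetilde D$, and that on $\{\varphi>0\}$ one has $0\le v<u\le M$, so $(H_1)$ gives $|g(u)-g(v)|\le L_g(M)\varphi$ and $|f(x,u)-f(x,v)|\le L_f(M)\varphi$. Hölder's inequality with exponents $\tfrac{N}{N-2s}$ and $\tfrac{N}{2s}$ then yields
\[
\int_{\widetilde D}\Big(\tfrac{g(u)-g(v)}{|x|^{2s}}+f(x,u)-f(x,v)\Big)\varphi\,dx\ \le\ \Big(\tfrac{L_g(M)}{|\lambda|^{2s}}+L_f(M)\Big)|\widetilde D|^{\frac{2s}{N}}\,\|w^+\|_{L^{2^*_s}(\widetilde D)}^2.
\]
Comparing the two displays, it is enough to pick $\delta=\delta\big(s,N,\lambda,L_f(M),L_g(M)\big)$ so small that $\big(L_g(M)|\lambda|^{-2s}+L_f(M)\big)\delta^{2s/N}<\tfrac{c_{N,s}S_{N,s}}{2}$; then $|\widetilde D|\le\delta$ forces $\|w^+\|_{L^{2^*_s}(\widetilde D)}=0$, i.e. $u\le v$ a.e. in $\widetilde D$, and hence $u\le v$ throughout $\Sigma_\lambda^\nu$ by the hypothesis on $\Sigma_\lambda^\nu\setminus\widetilde D$.

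The hard part will be the lower bound for the nonlocal bilinear form in the second step: one must show that the antisymmetry of $w$ across $T_\lambda^\nu$ is exactly what gives the right sign to the interaction between $\Sigma_\lambda^\nu$ and its reflected copy (this is where $|x-y|\le|x-y_\lambda^\nu|$ enters), and one must also make sure that the truncation $(u-v)^+\chi_{\Sigma_\lambda^\nu}$ genuinely belongs to $H^s(\mathbb{R}^N)$, a fractional-Hardy-type statement at the moving hyperplane that relies on $w=0$ on $T_\lambda^\nu$. A further, more mundane but essential, point is keeping track of the fact that $\widetilde D$ lies outside a fixed neighbourhood of the origin because $\lambda<0$: this is what allows the singular Hardy term to be absorbed and is the reason the threshold $\delta$ is permitted to depend on $\lambda$.
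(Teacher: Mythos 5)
Your proposal follows essentially the same route as the paper's proof: the same truncated test function $(u-v)^+\chi_{\Sigma_\lambda^\nu}$, the same use of the antisymmetry of $u-v$ together with the kernel monotonicity $|x-y|\le|x-y_\lambda^\nu|$ to show the reflected cross-interaction terms are nonnegative, the same Lipschitz plus Hardy-weight bound on $\{|x|\ge C(\lambda)\}$, and the same H\"older--Sobolev absorption giving the threshold $\delta$ depending only on $s,N,\lambda,L_f,L_g$. The only differences are cosmetic: the paper justifies $w\in H^s_0(\widetilde D)$ by citing \cite{LPPS} rather than a Hardy-type argument at the hyperplane, and it organizes the sign analysis through the finer decomposition into $D$, $\Sigma_\lambda^\nu\setminus D$, $R_\lambda^\nu(D)$ and $(\mathbb{R}^N\setminus\Sigma_\lambda^\nu)\setminus R_\lambda^\nu(D)$, which makes explicit that the negative contribution from $D\times\big((\mathbb{R}^N\setminus\Sigma_\lambda^\nu)\setminus R_\lambda^\nu(D)\big)$ must be paired with the positive excess $-\big(u(y)-v(y)\big)\varphi(x)$ coming from $D\times(\Sigma_\lambda^\nu\setminus D)$ --- i.e.\ in your ``collecting terms'' step you must retain that excess rather than discard it through the elementary inequality on $\Sigma_\lambda^\nu\times\Sigma_\lambda^\nu$.
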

\begin{proof}
Taking into account $(H_1)$, we define
\[
\hat L_f(\lambda)\,:=\, L_f(\|u\|_{L^\infty(\Omega_\lambda^\nu)}) \qquad \mbox{and} \qquad\hat L_g(\lambda)\,:=\,L_g(\|u\|_{L^\infty(\Omega_\lambda^\nu)}).
\]
Let us set
\begin{equation}\label{defw}
w(x):=\left\{\begin{array}{ll}
(u-v)^+(x) & \quad {\mbox{ if }} x\in\Sigma^{\nu}_\lambda, \\
0 & \quad {\mbox{ if }}x\in\R^N\setminus\Sigma^{\nu}_\lambda,
\end{array} \right.
\end{equation}
where~$(u-v)^+:=\max\{u-v,0\}$. It follows that $w\in H_0^s({\widetilde D})$ (see e.g. \cite[Proposition 2.4]{LPPS}). Furthermore $w$ is bounded with $\|w\|_{L^\infty(\R^N)}\leqslant \|u\|_{L^\infty(\Omega_\lambda^\nu)}$ and then we can consider $w$ as a test function in \eqref{ec_uu} and \eqref{ec_v} obtaining that
$$
\frac{c_{N,s}}{2}\int_{\mathbb{R}^N}\int_{\mathbb{R}^N}\frac{(u(x)-u(y))(w(x)-
w(y))}{|x-y|^{N+2s}}\,dx\,dy\leq\int_{D}\frac{g(u)}{|x|^{2s}}w dx+\int_{D}f(x,u)w dx
$$
and
$$
\frac{c_{N,s}}{2}\int_{\mathbb{R}^N}\int_{\mathbb{R}^N}\frac{(v(x)-v(y))(w(x)-
w(y))}{|x-y|^{N+2s}}\,dx\,dy\geq\int_{D}\frac{g(v)}{|x|^{2s}}w dx+\int_{D}f(x,v)w dx.
$$
{where $D:= supp\, w\subseteq \widetilde{D}$}. Subtracting the two previous inequalities we get that
\begin{eqnarray}\nonumber
&&\frac 12 c_{N,s}\int_{\mathbb{R}^N}\int_{\mathbb{R}^N}\frac{\left((u(x)-v(x))-(u(y)-v(y))\right)
\left(w(x)-w(y)\right)}{|x-y|^{N+2s}}\,dx\,dy\\
&\leq&\int_{D}\left(\frac{g(u)-g(v)}{|x|^{2s}}\right)w dx+\int_{D}(f(x,u)-f(x,v))w dx. \label{eq:Mm2}
\end{eqnarray}
\noindent In the following it will be crucial the following remark:
\begin{equation}\label{eq:boundedsol}
\|v\|_{L^{\infty}({\widetilde D})}\leqslant \|u\|_{L^{\infty}(\Omega_\lambda^\nu)}\leqslant \bar C(\lambda)\,,
\end{equation}
for some positive constant $\bar C$.

Taking into account that $\lambda <0$, there exists a constant $C(\lambda)$ such that $|x|\geq C$ in $\Omega_\lambda^\nu$. Then from \eqref{eq:Mm2} we get
\begin{eqnarray}\nonumber
&&\frac 12 c_{N,s}\int_{\mathbb{R}^N}\int_{\mathbb{R}^N}\frac{\left((u(x)-v(x))-(u(y)-v(y))\right)
\left(w(x)-w(y)\right)}{|x-y|^{N+2s}}\,dx\,dy\\\nonumber
&\leq&C(\lambda)\int_{D}\left({g(u)-g(v)}\right)w dx+\int_{D}(f(x,u)-f(x,v))w dx\\ \label{eq:M2}
&\leq&\hat C(\lambda,\hat L_f(\lambda),\hat L_g (\lambda))\int_{D}w^2dx,
\end{eqnarray}
where we  have used \eqref{eq:boundedsol},  $(H_1)$   and the fact that $(u-v)w=w^2.$

On the other hand we have
\begin{equation}\label{eq:M3}
\begin{split}
&\int_{\mathbb{R}^N}\int_{\mathbb{R}^N}
\frac{\left((u(x)-v(x))-(u(y)-v(y))\right)\left(w(x)-w(y)\right)}{|x-y|^{N+2s}}\,dx\,dy\\
&=\int_{\mathbb{R}^N}\int_{\mathbb{R}^N}
\frac{\left(w(x)-w(y)\right)^2}{|x-y|^{N+2s}}\,dx\,dy\\
&+\int_{\mathbb{R}^N}\int_{\mathbb{R}^N}
\frac{\left(\left(u(x)-v(x))-(u(y)-v(y)\right)-\left(w(x)-w(y)\right)\right)\left(w(x)-w(y)\right)}{|x-y|^{N+2s}}\,dx\,dy\\
&=\int_{\mathbb{R}^N}\int_{\mathbb{R}^N}
\frac{\left(w(x)-w(y)\right)^2}{|x-y|^{N+2s}}\,dx\,dy+\int_{\mathbb{R}^N}\int_{\mathbb{R}^N}
\frac{\mathcal{A}(x,y)}{|x-y|^{N+2s}}\,dx\,dy,
\end{split}
\end{equation}
where
\[
\mathcal{A}(x,y)\,:=\,\left(\left(u(x)-v(x))-(u(y)-v(y)\right)-
\left(w(x)-w(y)\right)\right)\left(w(x)-w(y)\right).
\]

Now, we prove that
\begin{equation}\label{dfjhgjgfjfdecom}
\int_{\mathbb{R}^N}\int_{\mathbb{R}^N}
\frac{\mathcal{A}(x,y)}{|x-y|^{N+2s}}\,dx\,dy\geq 0\,.
\end{equation}
For that we will descompose the space as follows
\begin{equation}\nonumber
\begin{split}
\mathbb{R}^N\times \mathbb{R}^N\,=\,\left( D\cup \mathcal CD\cup R_\lambda^\nu(D)\cup
 \mathcal{C}R_\lambda^\nu(D)\right)\times \left( D\cup \mathcal CD\cup R_\lambda^\nu(D)\cup
 \mathcal{C}R_\lambda^\nu(D)\right),
\end{split}
\end{equation}
where
$$\mathcal CD:=\Sigma_{\lambda}^{\nu}\setminus D\quad\mbox{and}\quad \mathcal{C}R_\lambda^\nu(D):=\left(\mathbb{R}^{N}\setminus\Sigma_{\lambda}^{\nu}\right)\setminus R_\lambda^\nu(D).$$
Since  $u-v$ is odd with respect to $T_\lambda^{\nu}$ by assumption, we get that
\begin{center}
\begin{equation}\nonumber
\begin{split}
&\mathcal{A}(x,y)= \left[-\left(u(x)-v(x)\right)w(y)\right]\geq0\quad\text{in}\quad \left(\mathcal CD \times D\right)\\
&\mathcal{A}(x,y)= \left[-\left(u(y)-v(y)\right)w(x)\right]\geq0\quad\text{in}\quad \left(D \times \mathcal CD\right)\\
&\mathcal{A}(x,y)= \left[-\left(u(y)-v(y)\right)w(x)\right] \leq0\quad\text{in}\quad \left(D \times  \mathcal{C}R_\lambda^\nu(D)\right)\\
&\mathcal{A}(x,y)= \left[-\left(u(y)-v(y)\right)w(x)\right]\geq0\quad\text{in}\quad \left(D \times R_\lambda^\nu(D)\right)\\
&\mathcal{A}(x,y)= \left[-\left(u(x)-v(x)\right)w(y)\right]\geq0\quad\text{in}\quad \left(R_\lambda^\nu(D) \times D\right)\\
&\mathcal{A}(x,y)= \left[-\left(u(x)-v(x)\right)w(y)\right]\leq0\quad\text{in}\quad \left( \mathcal{C}R_\lambda^\nu(D) \times D\right)\\
&\mathcal{A}(x,y)= 0\qquad \qquad\qquad\qquad\qquad\quad\,\,\,\,\text{elsewhere}\,.
\end{split}
\end{equation}
\end{center}
Therefore,we immediately get that
\begin{equation}\label{nuevo}
\int_{R_\lambda^\nu(D)}\int_{D}
\frac{\mathcal{A}(x,y)}{|x-y|^{N+2s}}\,dx\,dy+
\int_{D}\int_{ R_\lambda^\nu(D)}
\frac{\mathcal{A}(x,y)}{|x-y|^{N+2s}}\,dx\,dy
\geq 0.
\end{equation}
Moreover for $(x,y)\in D\times \mathcal CD$, using again the fact that $u-v$ is odd with respect to $T_\lambda^{\nu}$ it follows that
\begin{eqnarray}
0\leq\mathcal A(x,y)&=&-\left(u(y)-v(y)\right)w(x)\nonumber\\
&=& \left(u(y_\lambda)-v(y_\lambda)\right)w(x)\nonumber\\
&=&-\mathcal A(x,y_\lambda).\label{equ1}
\end{eqnarray}
Therefore, since $|x-y|\leq|x-y_\lambda|$ and $\mathcal A(x,y)\geq0$ when $(x,y)\in D\times  \mathcal CD$, by \eqref{equ1} we get that
\begin{eqnarray}\nonumber
&&\int_{D}\int_{CD}
\frac{\mathcal{A}(x,y)}{|x-y|^{N+2s}}\,dx\,dy+
\int_{ D}\int_{\mathcal CR_\lambda^\nu(D)}
\frac{\mathcal{A}(x,y)}{|x-y|^{N+2s}}\,dx\,dy\\\nonumber
&&\qquad =\int_{D}\int_{CD}
\left(\frac{\mathcal{A}(x,y)}{|x-y|^{N+2s}}\,dx\,dy+
\frac{\mathcal{A}(x,y_\lambda)}{|x-y_\lambda|^{N+2s}}\right)\,dx\,dy\\
&&\qquad =\int_{ D}\int_{CD}
\mathcal A(x,y)\left(\frac{1}{|x-y|^{N+2s}}-\frac{1}{|x-y_\lambda|^{N+2s}}\right)\,dx\,dy\ge 0.\label{esti1}
\end{eqnarray}
Similarly, one can prove that
\begin{equation}\label{esti2}
\int_{\mathcal CD}\int_{D}
\frac{\mathcal{A}(x,y)}{|x-y|^{N+2s}}\,dx\,dy+
\int_{\mathcal CR_\lambda^\nu(D)}\int_{ D}
\frac{\mathcal{A}(x,y)}{|x-y|^{N+2s}}\,dx\,dy
\geq 0.
\end{equation}
Then, by \eqref{nuevo}, \eqref{esti1} and \eqref{esti2},  \eqref{dfjhgjgfjfdecom} follows. Hence by \eqref{eq:M2}, \eqref{eq:M3} and \eqref{dfjhgjgfjfdecom}, we get that
%
$$\frac{c_{N,s}}{2}\int_{\mathbb{R}^N}\int_{\mathbb{R}^N}\frac{(w(x)-w(y))^2}{|x-y|^{N+2s}}\,dx\,dy\le
\hat C(\lambda,\hat L_f(\lambda),\hat L_g(\lambda))\int_{D}w^2dx.$$
Moreover, using  H\"older inequality and Theorem \ref{embeding}, we get that
\begin{eqnarray}\nonumber
\frac{c_{N,s}}{2}\|w\|_{H_0^s(\widetilde{D})}^{2}
&\leq&  \hat C(\lambda,\vartheta,\hat L_f(\lambda),\hat L_g(\lambda))|D|^{\frac{2^*_s-2}{2^*_s}}\left (\int_{D}w^{2^*_s} dx\right)^{\frac{2}{2^*_s}}
\\
&\leq&\frac{\hat C(\lambda,\vartheta,\hat L_f(\lambda),\hat L_g(\lambda))}{S_{N,s}}|\widetilde D|^{\frac{2^*_s-2}{2^*_s}} \|w\|_{H_0^s(\widetilde{D})}^{2}.\label{ssant}
\end{eqnarray}
Choosing $\delta=\delta(s,N,\lambda,\hat L_f(\lambda),\hat L_g(\lambda))$ such that
$$\delta<\left(\frac{c_{N,s}S_{N,s}}{2\hat C(\lambda,\hat L_f(\lambda),\hat L_g(\lambda))}\right)^{\frac{2^*_s}{2^*_s-2}},$$
if $|\widetilde D|\leq \delta$, we obtain from \eqref{ssant}
$$\|w\|_{H_0^s(\widetilde{D})}^{2}=0.$$
Then we obtain that $w=0$ in $\mathbb{R}^{N}$, so, in particular $u\leq v$ in D. This clearly implies that $u\leq v$ in $\widetilde{D}$ and, moreover, in $\Sigma_{\lambda}^{\nu}$.
\end{proof}
{We state now the following strong comparison principle as follows}
\begin{proposition}[Strong comparison principle]\label{proconfronto}
Let $w\in H^s_0(\Omega)$ be a continuous solution of
\begin{equation}\label{sjdfbhswkvcgvcdgcvgdvc}
(-\Delta)^sw \geq 0\quad \text{in} \,\, D
\end{equation}
with $D\subset \Omega_\lambda^\nu$. If $w\geq 0$ in $\Sigma_\lambda^\nu$ and odd with respect to the hyperplane ${T_{\lambda}^{\nu}}$ then
$$w>0\quad \text{in}\,\, D,$$
unless $w \equiv 0$  in $D$.
\end{proposition}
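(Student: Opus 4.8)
The plan is to argue by contradiction. Suppose the conclusion fails; since $w\geq 0$ in $\Sigma_\lambda^\nu\supseteq D$, this means that $w(x_0)=0$ at some $x_0\in D$ while $w\not\equiv 0$ in $D$. Because $D\subseteq\Omega_\lambda^\nu$ and $\Omega_\lambda^\nu$ is open, $x_0$ is an interior point of $\Sigma_\lambda^\nu$, so $d_0:=\lambda-x_0\cdot\nu>0$, $w$ vanishes and is nonnegative on a whole neighbourhood of $x_0$ contained in $\Sigma_\lambda^\nu$ (thus $x_0$ is a minimum point of $w$ over $\Sigma_\lambda^\nu$), and $w$ is bounded ($w\in\mathcal{C}^0(\overline{\Omega})$, $w=0$ outside $\Omega$).

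The mechanism behind the statement is the following. Using that $w$ is odd with respect to $T_\lambda^\nu$, i.e.\ $w(x_\lambda^\nu)=-w(x)$, and that $w(x_0)=0$, one reflects across $T_\lambda^\nu$ the part of the defining integral carried by $\mathbb{R}^N\setminus\Sigma_\lambda^\nu$ and obtains
\[
(-\Delta)^s w(x_0)\;=\;-\,c_{N,s}\,\mathrm{P.V.}\!\int_{\Sigma_\lambda^\nu} w(y)\,\mathcal K(x_0,y)\,dy,\qquad
\mathcal K(x_0,y):=\frac{1}{|x_0-y|^{N+2s}}-\frac{1}{|x_0-y_\lambda^\nu|^{N+2s}} .
\]
From the elementary identity $|x_0-y_\lambda^\nu|^2=|x_0-y|^2+4(\lambda-x_0\cdot\nu)(\lambda-y\cdot\nu)$ we have $|x_0-y|<|x_0-y_\lambda^\nu|$ for every $y\in\Sigma_\lambda^\nu$, hence $\mathcal K(x_0,\cdot)>0$ on $\Sigma_\lambda^\nu$. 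Since $w\geq 0$ there, the right-hand side is $\leq 0$; but $(-\Delta)^s w(x_0)\geq 0$ by hypothesis, so $\int_{\Sigma_\lambda^\nu}w(y)\,\mathcal K(x_0,y)\,dy=0$, and as the integrand is nonnegative this forces $w\equiv 0$ a.e.\ in $\Sigma_\lambda^\nu$; by continuity $w\equiv 0$ in $\Sigma_\lambda^\nu\supseteq D$, a contradiction. The pointwise evaluation above is licit, e.g., whenever $\overline D$ stays away from the origin (which is the situation of our applications, where $\lambda<0$), by the interior regularity of Proposition~\ref{pr:regularity}; cf.\ Remark~\ref{rm:point}.

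To handle a merely continuous $w$ while staying inside the weak formulation, I would replace the pointwise step by an $s$-harmonic replacement. For $0<r<d_0$ small enough that $B_r(x_0)\subset D$, let $v\in H^s(\mathbb{R}^N)$ solve $(-\Delta)^s v=0$ in $B_r(x_0)$, $v=w$ in $\mathbb{R}^N\setminus B_r(x_0)$. Testing the inequality $(-\Delta)^s(w-v)\geq 0$ in $B_r(x_0)$ with $(w-v)^-\in H^s_0(B_r(x_0))$, exactly as in the proof of Theorem~\ref{wcpsmall}, gives $w\geq v$ in $B_r(x_0)$, so $v(x_0)\leq w(x_0)=0$. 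Inserting $w$ in the Poisson formula $v(x_0)=\int_{\mathbb{R}^N\setminus B_r(x_0)}\mathcal P_r(x_0,y)\,w(y)\,dy$ (with $\mathcal P_r(x_0,\cdot)>0$ radially decreasing), reflecting the part over $\mathbb{R}^N\setminus\Sigma_\lambda^\nu$ across $T_\lambda^\nu$ and splitting off $B_r(x_0)$, one reaches
\[
0\;\geq\;v(x_0)\;=\;\int_{\Sigma_\lambda^\nu\setminus B_r(x_0)}\!\big(\mathcal P_r(x_0,y)-\mathcal P_r(x_0,y_\lambda^\nu)\big)w(y)\,dy\;-\;\int_{B_r(x_0)}\!\mathcal P_r(x_0,y_\lambda^\nu)\,w(y)\,dy .
\]
Here the first integrand is nonnegative (since $|x_0-y|<|x_0-y_\lambda^\nu|$ and $\mathcal P_r(x_0,\cdot)$ is decreasing), while the last integral is $O\!\big(r^{2s+N}\sup_{B_r(x_0)}w\big)$ because $|x_0-y_\lambda^\nu|\geq d_0$, and $\sup_{B_r(x_0)}w\to 0$ as $r\to 0^+$. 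Dividing by $r^{2s}$, using the explicit form of $\mathcal P_r$ and Fatou's lemma, and letting $r\to 0^+$ yields $\int_{\Sigma_\lambda^\nu}w(y)\,\mathcal K(x_0,y)\,dy\leq 0$, whence $w\equiv 0$ in $\Sigma_\lambda^\nu$ exactly as above, a contradiction. The single delicate point throughout is precisely this passage from the weak differential inequality and the mere continuity of $w$ to a statement localized at the minimum point $x_0$; once it is available, the antisymmetric reflection turns the kernel $\mathcal K(x_0,\cdot)$ into a strictly positive one on $\Sigma_\lambda^\nu$, and the strong comparison follows at once.
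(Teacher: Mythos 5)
Your proof is essentially correct, but it takes a genuinely different route from the paper: the paper gives no self-contained argument at all for this proposition, it simply remarks that the statement follows verbatim from the antisymmetric strong maximum principle of \cite{DMPS}, which in turn goes back to \cite[Proposition 2.17]{PhS}. You instead argue directly: assuming $w(x_0)=0$ at some $x_0\in D$, you compare $w$ in a small ball $B_r(x_0)\subset D$ with its $s$-harmonic replacement $v$ (the test-function step with $(w-v)^-$ is even simpler than in Theorem \ref{wcpsmall}, since with no zero-order term no smallness of the ball is needed), represent $v(x_0)$ by the fractional Poisson kernel, fold $\mathbb{R}^N\setminus\Sigma_\lambda^\nu$ back onto $\Sigma_\lambda^\nu$ using the oddness of $w$, and send $r\to 0$ to recover the strictly positive limiting kernel $\mathcal K(x_0,y)=|x_0-y|^{-N-2s}-|x_0-y_\lambda^\nu|^{-N-2s}$, which forces $w\equiv 0$ on $\Sigma_\lambda^\nu$; this is sound, including the Fatou step and the $O(r^{N+2s}\sup_{B_r}w)$ estimate of the reflected-ball term. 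What your approach buys is a proof that operates entirely at the level of the weak formulation and a merely continuous $w$, which is exactly the generality in which the proposition is invoked under assumption $(a)$; what it costs is the importation of classical facts about the Poisson kernel for balls (positivity, radial monotonicity, the $r^{2s}$ normalization as $r\to0$, and the identification of the variational replacement with the Poisson integral), which should be cited explicitly (e.g.\ \cite{Landkof}), much as the paper relies on \cite{DMPS,PhS}. Two minor caveats: your parenthetical claim that the purely pointwise computation is justified by Proposition \ref{pr:regularity} and Remark \ref{rm:point} does not apply here, since that proposition concerns solutions of \eqref{Eq:P} under $(H_2)$ and gives no regularity for a general antisymmetric supersolution $w$ (and under assumption $(a)$ no such regularity is available) --- harmless, because your actual proof is the weak one; and the assertion that $w\in\mathcal{C}^0(\overline{\Omega})$ is bounded overstates what is available in the application (where $w=u_{\bar\lambda}-u$ may blow up at $0_{\bar\lambda}$), though nothing in your argument genuinely uses global boundedness.
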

\begin{proof}
The proof follows repeating verbatim the one in \cite{DMPS} that goes back to \cite[Proposition 2.17]{PhS}.
\end{proof}

\begin{remark}
Note that \eqref{sjdfbhswkvcgvcdgcvgdvc} has to be understood in the weak formulation. Since here we consider a domain $D$ that does not touch the pole, then there is no need to consider bounded test functions, as in Definition \ref{def:sol}.\end{remark}

\section{Symmetry of Solutions}

The main result of this section, that will be a consequence of more general  monotonicity results, see Proposition \ref {th:simmmmmmm} and Proposition \ref{th:simmmmmmmM} below,  is stated in the following
\begin{theorem}\label{symmetry}
Let $u\in \mathcal{C}^0(\overline{\Omega}\setminus\{0\})$ be a weak solution to \eqref{Eq:P} and let $\Omega$ be  convex with respect to the $\nu$-direction $(\nu \in S^{N-1})$ and symmetric w.r.t. $T_0^\nu$, where
$$ T_0^\nu=\{x\in \mathbb{R}^N : x\cdot \nu=0\}$$
and let us suppose that are fulfilled the following natural symmetry and monotonicity properties on $f(x,t)$:
\begin{itemize}
\item[$(*)$]  $ f(x,t)\leq f(x_\lambda^\nu,t)\quad \text{if } \lambda <0,\, x\in\Omega_\lambda^\nu,\, t\in [0,\infty);$

\

and

\

\item[$(**)$] $ f(x,t)= f(x_0^\nu,t)\quad \text{if } x\in\Omega_0^\nu,\, t\in [0,\infty).$
\end{itemize}

\noindent Let us also assume   assume that either $(a)$ or $(b)$ are fulfilled, where:

\begin{itemize}
\item [$(a)$]  $(H_1)$  holds,  $f(x,t)$  is  nondecreasing with respect to the variable $t$ for any $x\in \Omega$ and $g(t)$  is  nondecreasing with respect to the variable $t$;

\item[$(b)$]  $(H_1)$ and  $(H_2)$  holds.
\end{itemize}

\

\noindent Then $u$ is symmetric w.r.t.~$T_0^\nu$ and increasing w.r.t. the $\nu$-direction in~$\Omega_0^\nu$.
Moreover, if  $\Omega$ is a ball, then $u$ is radial and radially decreasing.
\end{theorem}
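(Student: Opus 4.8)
The plan is to carry out the moving plane method in the $\nu$-direction, following the classical scheme of \cite{BN,GNN} but replacing the pointwise maximum principle arguments with the weak and strong comparison principles of Section 3. For $\lambda \in (a(\nu), 0]$ set $w_\lambda := u_\lambda^\nu - u$ on $\Sigma_\lambda^\nu$. The first step is to verify that $w_\lambda$ satisfies the hypotheses of Theorem \ref{wcpsmall} on $\Omega_\lambda^\nu$: namely that $u_\lambda^\nu$ is a supersolution and $u$ a subsolution of the relevant equation on $\Omega_\lambda^\nu$. Here the hypotheses $(*)$ and $(**)$ on $f$ enter, together with the monotonicity of $f$ and $g$ in $t$ under assumption $(a)$ (or the pointwise rewriting of Remark \ref{rm:point} under $(b)$); one also uses that for $\lambda<0$ the reflected pole $0_\lambda^\nu$ lies outside $\overline{\Sigma_\lambda^\nu}$, so that $\frac{g(u_\lambda^\nu)}{|x_\lambda^\nu|^{2s}}$ is controlled by $\frac{g(u_\lambda^\nu)}{|x|^{2s}}$ on $\Omega_\lambda^\nu$ (since $|x_\lambda^\nu|\ge|x|$ there), which is the decisive sign that makes $u_\lambda^\nu$ a supersolution despite the singular term. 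Also $u_\lambda^\nu\le u$ holds on $\Sigma_\lambda^\nu\setminus\Omega_\lambda^\nu$ trivially because there $u_\lambda^\nu\le 0 = $ does not apply; rather one notes $x_\lambda^\nu\in\RN\setminus\Omega$ forces $u_\lambda^\nu=0\le u$.

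The second step is the start of the procedure: for $\lambda$ close to $a(\nu)$ the set $\Omega_\lambda^\nu$ has small measure, so Theorem \ref{wcpsmall} applies and gives $u \ge u_\lambda^\nu$ in $\Sigma_\lambda^\nu$, i.e. $w_\lambda\le 0$. Then one defines
\[
\Lambda_0 := \sup\{\lambda\in(a(\nu),0] : u\ge u_\mu^\nu \text{ in } \Sigma_\mu^\nu \text{ for all } \mu\in(a(\nu),\lambda]\}
\]
and the goal is to show $\Lambda_0 = 0$. Suppose $\Lambda_0 < 0$. By continuity $u\ge u_{\Lambda_0}^\nu$ in $\Sigma_{\Lambda_0}^\nu$, and then the strong comparison principle, Proposition \ref{proconfronto}, applied to $w := u - u_{\Lambda_0}^\nu$ (which is $\ge 0$ on $\Sigma_{\Lambda_0}^\nu$, odd across $T_{\Lambda_0}^\nu$, and satisfies $(-\Delta)^s w\ge 0$ on $\Omega_{\Lambda_0}^\nu$ using the same monotonicity/sign bookkeeping as above) yields that on each connected component of $\Omega_{\Lambda_0}^\nu$ either $w>0$ or $w\equiv 0$. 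The component touching the pole, or the geometry of a convex domain, must be handled to rule out $w\equiv 0$ (if $w\equiv 0$ on a component whose reflection is strictly interior this contradicts $u>0$ and $u=0$ outside $\Omega$; if the domain is connected this is immediate). So $w>0$ in $\Omega_{\Lambda_0}^\nu$. The third step is the standard ``gain'': one covers $\overline{\Omega_{\Lambda_0}^\nu}$ minus a small neighborhood of the pole and of $\partial\Omega$ by a compact set $K$ on which $w\ge$ const $>0$; by continuity $w_\lambda$ stays positive on $K$ for $\lambda$ slightly larger than $\Lambda_0$, while the remaining set $\Omega_\lambda^\nu\setminus K$ has measure $\le\delta$, so Theorem \ref{wcpsmall} applies there (with $\widetilde D = \Omega_\lambda^\nu\setminus K$, using that $w_\lambda\le 0$ on $\Sigma_\lambda^\nu\setminus\widetilde D$) to conclude $w_\lambda\le 0$ on all of $\Sigma_\lambda^\nu$, contradicting maximality of $\Lambda_0$. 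Hence $\Lambda_0=0$, so $u\ge u_0^\nu$ in $\Omega_0^\nu$; applying the whole argument to $-\nu$ gives the reverse inequality, whence $u$ is symmetric w.r.t. $T_0^\nu$. Monotonicity in $\Omega_0^\nu$ follows because $u - u_\lambda^\nu \ge 0$ there for all $\lambda\le 0$, and strict monotonicity follows from the strong comparison principle, which upgrades $w_\lambda\ge 0$ to $w_\lambda>0$ for each $\lambda\in(a(\nu),0)$. The radial statement is then obtained by letting $\nu$ range over all directions in $S^{N-1}$.

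I expect the main obstacle to be the careful propagation/bookkeeping around the singular Hardy term and the lack of regularity at the origin: one must at every step check that the comparison between $\frac{g(u_\lambda^\nu)}{|x_\lambda^\nu|^{2s}}$ and $\frac{g(u)}{|x|^{2s}}$ (and the associated sign) is valid on $\Omega_\lambda^\nu$, that $u\in L^\infty(\Omega_\lambda^\nu)$ so that Theorem \ref{wcpsmall} is applicable with the constant $\delta$ depending only on $\lambda$ and the Lipschitz constants — and crucially that this $\delta$ does not degenerate as $\lambda\uparrow 0$, so the final ``gain'' step near $\Lambda_0<0$ goes through (this is exactly why Theorem \ref{wcpsmall} was stated with explicit control of $\delta$). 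Under assumption $(b)$, where $f$ need not be monotone in $t$, the step showing $(-\Delta)^s w_\lambda\ge 0$ cannot rely on monotonicity and must instead use the pointwise formulation of Remark \ref{rm:point} on compact subsets away from the pole, together with the regularity of Proposition \ref{pr:regularity}; reconciling the weak argument near the pole with the pointwise argument away from it is the other delicate point. Finally, the convex-but-not-strictly-convex case requires the Berestycki–Nirenberg refinement: one does not use moving planes from the boundary but the above sup-argument, and the only geometric input needed is that $(\Omega_\lambda^\nu)'\subseteq\Omega$ for $\lambda\le 0$, which holds by convexity and symmetry.
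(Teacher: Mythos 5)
Your overall scheme (start from small caps via Theorem \ref{wcpsmall}, strong comparison at the critical plane, compact-set ``gain'', then repeat in the direction $-\nu$) is indeed the paper's scheme, but you have set up the comparison in the wrong direction, and the facts you invoke to support it are false with the paper's notation. For $\lambda<0$ the moving plane method must produce $u\le u_\lambda^\nu$ in $\Omega_\lambda^\nu$ (this is exactly the conclusion of Propositions \ref{th:simmmmmmm} and \ref{th:simmmmmmmM}), not $u\ge u_\lambda^\nu$: on $\Sigma_\lambda^\nu\setminus\Omega_\lambda^\nu$ one has $u=0\le u_\lambda^\nu$, and the reflection of such a point may perfectly well lie in $\Omega$ when $\lambda\le\lambda_1^0(\nu)$, so your claim that $x_\lambda^\nu\in\mathbb{R}^N\setminus\Omega$ forces $u_\lambda^\nu\le u$ there is wrong; consequently the exterior hypothesis of Theorem \ref{wcpsmall} is available only for the pair $(u,v)=(u,u_\lambda^\nu)$, and your starting step ``$u\ge u_\lambda^\nu$ for $\lambda$ near $a(\nu)$'' cannot be obtained (it is in fact false near $\partial\Omega$, where $u$ is small and $u_\lambda^\nu>0$). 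Similarly, from $|x_\lambda^\nu|^2=|x|^2+4\lambda(\lambda-x\cdot\nu)$ one gets $|x_\lambda^\nu|\le|x|$ for $x\in\Sigma_\lambda^\nu$, $\lambda<0$, not $|x_\lambda^\nu|\ge|x|$, and the reflected pole $0_\lambda^\nu=2\lambda\nu$ lies \emph{inside} $\Sigma_\lambda^\nu$ (it is the original pole $0$ that lies outside); it is precisely $|x_\lambda^\nu|\le|x|$ that makes $u_\lambda^\nu$ a supersolution with the weight $|x|^{-2s}$ (see \eqref{eqreflecata}) and that yields the strict sign in \eqref{eq:1w_l2}. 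Note also that with your orientation the final output would be $u$ nonincreasing in the $\nu$-direction in $\Omega_0^\nu$ (radially increasing in the ball), the opposite of the statement and false in concrete examples, so this is not a harmless relabelling.

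Two further points are genuinely missing rather than merely compressed. Under assumption $(b)$ you cannot appeal to Proposition \ref{proconfronto} at the critical plane, since without monotonicity of $f$ and $g$ in $t$ you have no way to show $(-\Delta)^s(u_{\bar\lambda}-u)\ge 0$; the paper instead argues at a hypothetical touching point $\bar x$: by Proposition \ref{pr:regularity} and Remark \ref{rm:point} the equation holds pointwise near $\bar x$, the equation plus \eqref{sunday}, $|\bar x_{\bar\lambda}|<|\bar x|$ and $u(\bar x)=u_{\bar\lambda}(\bar x)$ give $(-\Delta)^su_{\bar\lambda}(\bar x)-(-\Delta)^su(\bar x)\ge 0$ (inequalities \eqref{eq:1w_l1}--\eqref{eq:1w_l2}), while the integral identity \eqref{Auguri}, using oddness of $u_{\bar\lambda}-u$ across $T_{\bar\lambda}$ and $u\le u_{\bar\lambda}$, $u\not\equiv u_{\bar\lambda}$, forces the strict opposite sign \eqref{eq:giuseppestronzo}; this computation is the heart of case $(b)$ and is absent from your proposal. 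Even under $(a)$, $u_{\bar\lambda}$ need not be continuous at the reflected pole $0_{\bar\lambda}$, so the strong comparison principle can only be applied on balls avoiding $0_{\bar\lambda}$, and the alternative $u\equiv u_{\bar\lambda}$ must be excluded by the open--closed argument in $\Omega_{\bar\lambda}\setminus B_\sigma(0_{\bar\lambda})$ together with the Dirichlet condition; likewise the uniformity of $\delta$ in the gain step is secured by fixing $\bar\e$ with $\bar\lambda+\bar\e<\lambda_1^0$, using $\|u\|_{L^\infty(\Omega_{\bar\lambda+\e})}\le\|u\|_{L^\infty(\Omega_{\bar\lambda+\bar\e})}$ and choosing $K$ compact away from $B_{4\tau}(0_{\bar\lambda})$ so that $|\widetilde D_\e|\le\bar\delta$ --- quantitative choices you acknowledge but do not carry out.
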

\begin{remark}
If e.g. we consider the case $\nu=(1,0,\cdots,0)$, hypotheses $(*)$ and $(**)$ in the statement of Theorem \ref{symmetry}  mean that $f(x,t)$ nondecreasing in the $x_1$-direction for $x_1<0$ and $f(x,t)$ even in $x_1$-direction respectively.
\end{remark}

Theorem \ref{symmetry} will be obtained as a consequence of  more general results. We start with the following

\begin{proposition}\label{th:simmmmmmm}
Let $u\in \mathcal{C}^{0}(\overline{\Omega}\setminus\{0\})$  be a {weak} solution to \eqref{Eq:P}. Set
\[
\lambda_1^0(\nu)\,:=\, \min \{0\,,\,\lambda_1(\nu)\},
\]
where $\lambda_1(\nu)$ is defined in \eqref{eq:sn7}. Assume that  $(H_1)$ is fulfilled and  assume that $f(x,t)$ is nondecreasing  with respect to the variable $t$, for all $x\in \Omega$, as well as $g(t)$ is nondecreasing with respect to the variable~$t$.
Assume also that
\begin{equation}\label{acabando}
f(x,t)\leq f(x_\lambda^\nu,t)\quad \text{if } \lambda \leq \lambda_1^0(\nu),\, x\in\Omega_\lambda^\nu,\,\,\, t\in [0,\infty).
\end{equation}
Then, for any $a(\nu)\leq\lambda\leq \lambda_1^0(\nu)$, we have
\begin{equation}\nonumber
u(x)\leq u^\nu_{\lambda}(x),\quad x \in \Omega^\nu _{\lambda}.
\end{equation}
Furthermore $u$ is   strictly monotone increasing in the $\nu$-direction in $\Omega^\nu _{\lambda}$.
\end{proposition}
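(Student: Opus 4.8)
The plan is to run the classical moving plane procedure of Berestycki--Nirenberg, but phrased entirely through the weak formulation so as to accommodate the fact that $u$ is merely continuous away from the origin and unbounded near it. Introduce the open set
\[
\Lambda \,:=\, \{\lambda \in (a(\nu),\lambda_1^0(\nu)]\, :\, u \leq u_\mu^\nu \text{ in }\Omega_\mu^\nu \text{ for all } \mu\in(a(\nu),\lambda]\}.
\]
The proof then splits into three parts: (i) $\Lambda$ is nonempty, i.e.\ $u\le u_\lambda^\nu$ in $\Omega_\lambda^\nu$ for $\lambda$ close to $a(\nu)$; (ii) $\lambda^*:=\sup\Lambda$ equals $\lambda_1^0(\nu)$; (iii) strict monotonicity and the strict inequality $u<u_\lambda^\nu$ in $\Omega_\lambda^\nu$ for $\lambda<\lambda_1^0(\nu)$, yielding strict monotonicity of $u$ in the $\nu$-direction on $\Omega_{\lambda_1^0(\nu)}^\nu$.

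For the starting step, fix $\lambda<0$ close to $a(\nu)$ and set $v:=u_\lambda^\nu$. One checks that $u$ satisfies \eqref{ec_uu} and $v$ satisfies \eqref{ec_v} in $\widetilde D:=\Omega_\lambda^\nu$: indeed $v$ is a weak solution of the reflected equation, and by the symmetry/monotonicity hypotheses \eqref{acabando} together with monotonicity of $g$ and the fact that $|x_\lambda^\nu|\geq |x|$ for $x\in\Sigma_\lambda^\nu$ with $\lambda<0$ (so the Hardy weight only helps), the reflected right-hand side dominates, giving $(-\Delta)^s v \ge \frac{g(v)}{|x|^{2s}}+f(x,v)$ weakly in $\widetilde D$. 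Also $w:=(u-v)^+$ vanishes on $\Sigma_\lambda^\nu\setminus\widetilde D$ and $u-v$ is odd across $T_\lambda^\nu$. Since $u\in L^\infty(\Omega_\lambda^\nu)$ (here I use $\lambda<0$ so we are away from the pole and $u\in\mathcal C^0(\overline\Omega\setminus\{0\})$) and $|\Omega_\lambda^\nu|\to 0$ as $\lambda\downarrow a(\nu)$, Theorem \ref{wcpsmall} applies once $|\Omega_\lambda^\nu|\le\delta$, with $\delta$ depending only on $s,N,\lambda$ and the Lipschitz constants evaluated at $\|u\|_{L^\infty(\Omega_\lambda^\nu)}$; a key point, which is exactly why Theorem \ref{wcpsmall} was stated with that explicit dependence, is that as $\lambda$ decreases these quantities stay controlled, so a uniform choice of $\delta$ works on an initial interval $(a(\nu),a(\nu)+\varepsilon]$. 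Hence $u\le u_\lambda^\nu$ there and $\Lambda\neq\emptyset$.

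For step (ii), argue by contradiction assuming $\lambda^*<\lambda_1^0(\nu)$. By continuity of $u$ on $\overline\Omega\setminus\{0\}$ and the definition of $\lambda^*$ one gets $u\le u_{\lambda^*}^\nu$ in $\Omega_{\lambda^*}^\nu$; since $u\not\equiv u_{\lambda^*}^\nu$ (the latter would force $\Omega$ to be symmetric about $T_{\lambda^*}^\nu$, impossible for $\lambda^*<\lambda_1^0(\nu)\le 0$ in a convex domain unless $\lambda^*$ is the symmetry hyperplane), the strong comparison principle Proposition \ref{proconfronto} applied to $w:=u_{\lambda^*}^\nu-u\ge 0$, which satisfies $(-\Delta)^s w\ge 0$ weakly in $\Omega_{\lambda^*}^\nu$ (using monotonicity of $f,g$ in $t$ and again the Hardy-weight sign), yields $u<u_{\lambda^*}^\nu$ in $\Omega_{\lambda^*}^\nu$. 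Now, for $\lambda=\lambda^*+\tau$ with $\tau>0$ small, split $\Omega_\lambda^\nu$ into a compact ``core'' $K\Subset\Omega_{\lambda^*}^\nu$ where $u_\lambda^\nu-u$ stays bounded below by a positive constant (by the strict inequality and uniform continuity, for $\tau$ small enough), and the remainder $\Omega_\lambda^\nu\setminus K$, whose measure is as small as we wish, uniformly for $\tau$ small. On $\Omega_\lambda^\nu\setminus K$ apply Theorem \ref{wcpsmall} with $\widetilde D=\Omega_\lambda^\nu\setminus K$ — the hypothesis $u\le v$ on $\Sigma_\lambda^\nu\setminus\widetilde D$ holds because $K$ is exactly where we already know the inequality and the reflection of $K$ contributes through the oddness — to conclude $u\le u_\lambda^\nu$ in $\Omega_\lambda^\nu$, contradicting maximality of $\lambda^*$. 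This ``continuation'' step is the main obstacle: one must carefully set up the decomposition so that the odd-reflection bookkeeping in Theorem \ref{wcpsmall} (the roles of $D$, $\mathcal C D$, $R_\lambda^\nu(D)$) remains valid with $\widetilde D$ a non-convex annular-type region, and one must ensure the constant $\delta$ in Theorem \ref{wcpsmall} does not degenerate as $\lambda\to\lambda^*$, which is guaranteed since $\lambda^*<0$ keeps us away from the pole and $\|u\|_{L^\infty(\Omega_\lambda^\nu)}$ stays bounded.

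Finally, for step (iii): once $u\le u_\lambda^\nu$ in $\Omega_\lambda^\nu$ is established for every $\lambda\in(a(\nu),\lambda_1^0(\nu)]$, the strong comparison principle (Proposition \ref{proconfronto}), applied on each connected component, upgrades this to $u<u_\lambda^\nu$ in $\Omega_\lambda^\nu$ whenever $\lambda<\lambda_1^0(\nu)$ (the alternative $u\equiv u_\lambda^\nu$ is excluded as above). Differentiating this family of inequalities in $\lambda$ in the usual weak sense — or more elementarily, observing that for $x$ with $x\cdot\nu<\lambda_1^0(\nu)$ and $a(\nu)<\lambda<\lambda'$ with $x\in\Omega_\lambda^\nu$, the point $x$ lies ``below'' both $T_\lambda^\nu$ and $T_{\lambda'}^\nu$ and comparing $u(x)$ to $u_\lambda^\nu(x)$ and $u_{\lambda'}^\nu(x)$ shows $u$ is strictly increasing along the segment in the $\nu$-direction joining reflected points — gives strict monotonicity of $u$ in the $\nu$-direction throughout $\Omega_{\lambda_1^0(\nu)}^\nu$. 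When $\Omega$ is a ball centered at the origin, $\lambda_1^0(\nu)=0$ for every direction $\nu\in S^{N-1}$, and applying the symmetry statement of Theorem \ref{symmetry} (which this proposition feeds into) in all directions forces $u$ to be radial; strict monotonicity in every direction then gives strict radial monotonicity. I would keep the case $g=0$ in mind throughout since it only simplifies the Hardy-weight estimates.
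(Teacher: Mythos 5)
Your strategy is the same as the paper's: start the plane with Theorem \ref{wcpsmall} applied to $\widetilde D=\Omega_\lambda^\nu$, $v=u_\lambda^\nu$ for $\lambda$ near $a(\nu)$; at a supposed critical level $\bar\lambda<\lambda_1^0(\nu)$ get strict inequality via the strong comparison principle and then continue by splitting $\Omega_{\bar\lambda+\e}^\nu$ into a compact core where $u_{\bar\lambda}^\nu-u$ has a positive lower bound and a small-measure remainder to which Theorem \ref{wcpsmall} applies; finally deduce strict monotonicity by comparing at the intermediate hyperplane. However, as written there is a genuine gap in the continuation step: you ignore the reflected pole $0_{\bar\lambda}=R^\nu_{\bar\lambda}(0)$, which lies in $\Sigma_{\bar\lambda}^\nu$ (and possibly in $\Omega_{\bar\lambda}^\nu$). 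Since $u$ need not be bounded at $0$, the function $w_{\bar\lambda}=u^\nu_{\bar\lambda}-u$ need not be continuous at $0_{\bar\lambda}$, so Proposition \ref{proconfronto} cannot be applied to $w_{\bar\lambda}$ on all of $\Omega_{\bar\lambda}^\nu$ as you do; the paper applies it only on balls $B_r(x)\subset\subset\Omega^\nu_{\bar\lambda}$ avoiding $0_{\bar\lambda}$ and then runs an open/closed connectedness argument on $\Omega^\nu_{\bar\lambda}\setminus B_\sigma(0_{\bar\lambda})$, excluding $u\equiv u^\nu_{\bar\lambda}$ by the Dirichlet condition (on $\partial\Omega\cap\Sigma^\nu_{\bar\lambda}$ one has $u=0$ while $u^\nu_{\bar\lambda}>0$ because the reflected cap is contained in $\Omega$). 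Note that your exclusion of the case $u\equiv u^\nu_{\bar\lambda}$ appeals to convexity and symmetry of $\Omega$, which are \emph{not} hypotheses of this proposition. For the same reason, your claim that $u^\nu_{\lambda}-u$ stays bounded below on a compact core ``by the strict inequality and uniform continuity'' fails near $0_{\bar\lambda}$, and the singularity moves with $\lambda$: one must choose $K\subset\Omega_{\bar\lambda}\setminus B_{4\tau}(0_{\bar\lambda})$ with $|B_{4\tau}(0_{\bar\lambda})|$ small, take the shift $\e<\tau$ so that $0_{\bar\lambda+\e}\in B_{\tau}(0_{\bar\lambda})$, and invoke uniform continuity of $u$ on $\overline{\Omega\setminus B_{2\tau}(0)}$ to get $w_{\bar\lambda+\e}\geq\rho/2$ on $K$, exactly as the paper does; with this correction your small-measure bookkeeping and the application of Theorem \ref{wcpsmall} to $\widetilde D_\e=\Omega_{\bar\lambda+\e}\setminus K$ go through.

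A further slip in the starting step: for $\lambda<0$ and $x\in\Sigma_\lambda^\nu$ one has $|x_\lambda^\nu|\leq|x|$ (the reflection moves points of $\Sigma_\lambda^\nu$ toward the pole), not $|x_\lambda^\nu|\geq|x|$ as you assert; it is precisely this inequality, combined with \eqref{acabando} and the monotonicity assumptions, that makes $u_\lambda^\nu$ a weak supersolution of the equation with the weight $|x|^{-2s}$ on $\Omega_\lambda^\nu$. The conclusion you draw (``the reflected right-hand side dominates'') is the correct one, but the justification has the sign reversed and should be fixed.
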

\begin{proof}
In the proof we will fix  the direction $\nu=e_1:=(1,0,\ldots, 0)$. The proof can be carried out for general directions
with trivial modifications.
 In this case we have
\begin{equation}\label{eq:incontro0}
T_\lambda^\nu=T_\lambda=\{x\in \mathbb{R}^N:x_1=\lambda\}.
\end{equation}
Moreover
\begin{equation}\label{eq:incontro1}
\Sigma_\lambda^\nu=\Sigma_\lambda=\{x\in \mathbb{R}^N:x_1 <\lambda\},\quad \Omega_\lambda=\Omega\cap\Sigma_\lambda,
\end{equation}
\begin{equation}\label{eq:incontro11}
x_\lambda=R_\lambda(x)=(2\lambda-x_1,x_2,\ldots,x_n),
\end{equation}
\begin{equation}\label{bisseteeq:sn33}
u_{\lambda }(x)=u(x_\lambda) \,,
\end{equation}
and
\begin{equation}\label{bisseteeq:sn4}
a=\inf _{x\in\Omega}x_1.
\end{equation}
When $\lambda >a$, since $\Omega_\lambda$ is nonempty, we set
\begin{equation}\label{eq:colnumero}
(\Omega_\lambda)':=R_\lambda(\Omega_\lambda).
\end{equation}
We also denote in this case $\lambda_1=\sup\{\lambda : (\Omega_\lambda)'\subset \Omega\}$ and
$$\lambda_1^0\,:=\, \min \{0\,,\,\lambda_1\}\,.$$

For $a<\lambda<\lambda_1^0$, since $u$ is a solution to \eqref{Eq:P}, it is easy to verify that $u_\lambda\in H^s_0(R_\lambda(\Omega))$ satisfies
\begin{equation}\nonumber
\frac {c_{N,s}}{2}\int_{\R^N}\int_{\R^N}\frac{(u_\lambda(x)-u_\lambda(y))(\varphi(x)-\varphi(y))}{|x-y|^{N+2s}}\,dx\,dy=
\int_{R_\lambda(\Omega)}\frac{g(u_\lambda)}{|x_\lambda|^{2s}}\varphi \,dx+\int_{R_\lambda(\Omega)}f(x_\lambda,u_\lambda)\varphi\,dx,
\end{equation}
where $c_{N,s}$ has been defined in \eqref{constante} and  $\varphi \in H^s_0(R_\lambda(\Omega))\cap L^\infty (R_\lambda(\Omega))$. By \eqref{acabando} and the fact that $|x_\lambda|\leq |x|$ for $\lambda<0$, we deduce that
\begin{equation}\label{eqreflecata}
\frac {c_{N,s}}{2}\int_{\R^N}\int_{\R^N}\frac{(u_\lambda(x)-u_\lambda(y))(\varphi(x)-\varphi(y))}{|x-y|^{N+2s}}\,dx\,dy\geqslant
\int_{R_\lambda(\Omega)}\frac{g(u_\lambda)}{|x|^{2s}}\varphi \,dx+\int_{R_\lambda(\Omega)}f(x,u_\lambda)\varphi\,dx,
\end{equation}
for any nonnegative test function  $\varphi \in H^s_0(R_\lambda(\Omega))\cap  L^\infty (R_\lambda(\Omega))$.\\

\noindent We are now in position to exploit the weak comparison principle in small domains. In fact, for $\lambda-a$ small,  we can apply Theorem \ref{wcpsmall} with $\widetilde D=\Omega_\lambda$ and $v=u_\lambda$.
To control the behavior of the constants in Theorem \ref{wcpsmall}, we  fix $a<\hat \lambda<0$ so that
\[
\|u\|_{L^\infty(\Omega_\lambda)}\leqslant \|u\|_{L^\infty(\Omega_{\hat \lambda})}\qquad\text{for}\quad \lambda<\hat\lambda\,.
\]
This allows us to take $\lambda-a$ small enough to guarantee that $|\Omega_\lambda|\leqslant \delta$ where $\delta$ was given in \eqref{controlling}. Therefore by Theorem \ref{wcpsmall} we get that
$$
u\leqslant u_\lambda\qquad \text {in}\quad \Omega_\lambda\,\mbox{ for $\lambda<0$ such that $0<\lambda-a$ is small enough.}
$$
\\

We start now  the moving plane procedure setting
\begin{equation}\label{eq:LAMMMMMMMMBDA}
\Lambda\,:=\,\{a<\lambda<\lambda_1^0\,\,|\,\,u\leqslant  u_\mu\,\,\text{in}\,\,\Omega_\mu\quad\forall\,a<\mu\leqslant\lambda\}.
\end{equation}
In fact, as we already proved, we have that $\Lambda\ne\emptyset$ so we can set
\[
\bar\lambda\,:=\,\sup\,\Lambda\,.
\]
The proof of the theorem will be done if we show that $\bar\lambda=\lambda_1^0$. To prove this we argue by contradiction and we assume that $\bar\lambda<\lambda_1^0$. By continuity we deduce that
\begin{equation}\label{ssant2}
u\leqslant u_{\bar\lambda}\quad \text{in}\,\,\Omega_{\bar\lambda}\setminus\{0_{\bar\lambda}\}\,.
\end{equation}
Let us show that, in fact,
\begin{equation}\label{objetivoss}
u< u_{\bar\lambda}\quad \text{in}\,\,\Omega_{\bar\lambda}\setminus \{0_{\bar\lambda}\}\, \mbox{ for  $\bar\lambda<\lambda_1^0$}.
\end{equation}
To prove this note  that, since by assumption  $f(x,t)$ and $g(t)$ are nondecreasing with respect to the variable $t$, then it follows that
\begin{equation}\nonumber
\int_{\Omega_{\bar\lambda}}\frac{g(u)}{|x|^{2s}}\varphi \,dx+\int_{\Omega_{\bar\lambda}}f(x,u)\varphi\,dx\leqslant
\int_{\Omega_{\bar\lambda}}\frac{g(u_\lambda)}{|x|^{2s}}\varphi \,dx+\int_{\Omega_{\bar\lambda}}f(x,u_\lambda)\varphi\,dx,
\end{equation}
for  $0\leq\varphi \in H^s_0(\Omega_{\bar\lambda})\cap  L^\infty (\Omega_{\bar\lambda})$. From this and recalling \eqref{eqreflecata},
setting
\[
w_{\bar\lambda}\,:=\, u_{\bar\lambda}-u\,,
\]
it easy follows that
\begin{equation}\label{eq:intrinsecooo}
(-\Delta)^s\,w_\lambda \geq 0\quad \text{in} \,\, \Omega_{\bar\lambda}\,.
\end{equation}
For any $B_r(x)\subset\subset \Omega_{\bar\lambda}$ such that $0_{\bar\lambda}\notin B_r(x)$, since we have that $u$ and $u_{\bar\lambda}$ are continuous in $B_r(x)\subset\subset \Omega_{\bar\lambda}$, we can apply
  the strong comparison principle, given in Proposition \ref{proconfronto}, to deduce that $ u<u_{\bar\lambda}$ in $B_r(x)$ unless $u\equiv  u_{\bar\lambda}$ in $B_r(x)$. \\
 \\If now $ u<u_{\bar\lambda}$ in $\Omega_{\bar\lambda}\setminus \{0_{\bar\lambda}\}$ our claim holds true. If this is not the case, by \eqref{ssant2}, then there exists at least a point
  $\bar x\in \Omega_{\bar\lambda}\setminus \{0_{\bar\lambda}\} $ such that $ u(\bar x)=u_{\bar\lambda}(\bar x)$ and
  we can  consider $\sigma>0$ such that $\bar x\notin B_\sigma(0_{\bar\lambda})$. Since $u$ and $u_{\bar\lambda}$
  are continuous in the closure of $\Omega_{\bar\lambda}\setminus B_\sigma(0_{\bar\lambda})$, exploiting the strong comparison principle as here above, it follows
 that the set $\{u=u_{\bar\lambda}\}$ is not empty, open and closed in $\Omega_{\bar\lambda}\setminus B_\sigma(0_{\bar\lambda})$. This imply that $ u=u_{\bar\lambda}$ in the closure of $\Omega_{\bar\lambda}\setminus B_\sigma(0_{\bar\lambda})$.
   Since this is not possible  by the Dirichlet condition, then we have
\[
u<u_{\bar\lambda}\quad \text{in}\,\,\Omega_{\bar\lambda}\setminus \{0_{\bar\lambda}\}\,,
\]
that is, \eqref{objetivoss} follows.
\\

Let us now fix $\bar\e>0$ such that $\bar\lambda+\bar\e<\lambda_1^0$, so that
\[
\|u\|_{L^\infty(\Omega_{\bar\lambda+\e})}\leqslant \|u\|_{L^\infty(\Omega_{\bar\lambda+\bar\e})}\qquad\text{for any}\quad 0\leqslant \e \leqslant \bar\e\,.
\]
From this we will conclude that there exists $\bar\delta>0$, not depending on $\e$, such that the Theorem \ref{wcpsmall}  can be applied with some $\widetilde D=\widetilde D_\e$, $\widetilde D_\e\subset \Omega_{\bar\lambda+\e}$ such that
\begin{equation}\label{xx4}
|\widetilde D_\e|\leqslant \bar\delta\,,
 \end{equation}
$v=u_{\bar\lambda+\e}$, $\lambda=\bar\lambda+\e$ and  $0\leqslant\e\leqslant\bar\e $. In fact, in order to apply Theorem \ref{wcpsmall} we first
 consider $\hat\e>0$ such that $\hat\e<\bar\e$ and
\begin{equation}\label{xx1}
{|\Omega_{\bar\lambda+\hat\e}\setminus\Omega_{\bar\lambda}|}\leqslant \frac{\bar\delta}{4}\,,
\end{equation}
{for some $\bar\delta>0$ small enough.}
Furthermore we fix $\tau>0$ such that
\begin{equation}\label{xx2}
B_{4\tau}(0_{\bar\lambda})\subset{\Omega_{\bar\lambda}},\qquad\text{and}\qquad \mathcal |B_{4\tau}(0_{\bar\lambda})|\leqslant \frac{\bar\delta}{4},
\end{equation}
and
we consider a compact set $K$ so that
\begin{equation}\label{xx3}
K\subset \left(\Omega_{\bar\lambda}\setminus B_{4\tau}(0_{\bar\lambda})\right)\qquad\text{and}\qquad \left|(\Omega_{\bar\lambda}\setminus B_{4\tau}(0_{\bar\lambda})\setminus K\right|\leqslant \frac{\bar\delta}{4}.
\end{equation}
Since
$u$ is continuous  in the interior of $\Omega\setminus \{0\}$, it follows that there exists $\rho=\rho(K)>0$ such that
\[
w_{\bar\lambda}\geqslant \rho,\quad\text{in}\quad K\,.
\]
We assume now, without loss of generality, that $\hat\e<\tau$, thus obtaining
\[
0_{\bar\lambda+\e}\subset B_{\tau}(0_{\bar\lambda}),\quad\text{for all }\quad 0\leqslant \e<\hat\e\,.
\]
This allows us to exploit the fact that $u$ is uniformly continuous in $\overline{\Omega\setminus \{B_{2\tau}(0)\}}$ to deduce that,
eventually reducing $\hat\e$, we have
\begin{equation}\label{acabando2}
w_{\bar\lambda+\e}\geqslant \frac{\rho}{2}\quad\text{in $K$,  $0\leqslant \e<\hat\e.$}
\end{equation}
Setting
\[
\widetilde D_\e\,:=\, \Omega_{\bar\lambda+\e}\setminus K,\quad 0\leqslant \e<\hat\e\,,
\]
it follows  by \eqref{xx1}, \eqref{xx2} and \eqref{xx3} that $|\widetilde D_\e|<\bar\delta$ ( i.e. \eqref{xx4} follows).  Applying Theorem~\ref{wcpsmall}  and using \eqref{acabando2} we obtain that
$$u\leqslant u_{\bar\lambda+\e}\text{ in $\Omega_{\bar\lambda+\e}$, $0<\varepsilon\leqslant\hat\varepsilon<\bar\e$}.$$
This is a contradiction with the definition of $\bar\lambda$ showing that
the case $\bar\lambda<\lambda_1^0$ cannot occur.\\

Let us finally prove that the solution  is strictly monotone increasing in $\Omega_{\lambda_1^0}$.\\
For $(t_1,x')$ and $(t_2,x')$ belonging to $\Omega_{\lambda_1^0}$, with $t_1<t_2$, we have that
$u\leqslant u_{\frac{t_1+t_2}{2}}$ in $\Omega_{\frac{t_1+t_2}{2}}$.  Actually, by the strong comparison principle
and arguing as above, it follows that $u< u_{\frac{t_1+t_2}{2}}$ in $\Omega_{\frac{t_1+t_2}{2}}$.
This implies that
\[
u(t_1,x')<u(t_2,x')\,,
\]
and we have done.

\end{proof}

Now we prove a similar result, but under a different set of assumptions. Namely we have the following

\begin{proposition}\label{th:simmmmmmmM}
Let $u\in \mathcal{C}^{0}(\overline{\Omega}\setminus\{0\})$  be a {weak} solution to \eqref{Eq:P} and assume that $(H_1)$ and  $(H_2)$  hold. Set
\[
\lambda_1^0(\nu)\,:=\, \min \{0\,,\,\lambda_1(\nu)\},
\]
where $\lambda_1(\nu)$ is defined in \eqref{eq:sn7}.  Assume also that
\begin{equation}\label{sunday}
 f(x,t)\leq f(x_\lambda^\nu,t)\quad \text{if } \lambda \leq \lambda_1^0(\nu),\, x\in\Omega_\lambda^\nu,\, t\in \,\,[0,\infty).
 \end{equation}
Then, for any $a(\nu)\leq\lambda\leq \lambda_1^0(\nu)$, we have
$$
u(x)\leq u^\nu_{\lambda}(x),\quad  x \in \Omega^\nu _{\lambda}.
$$
Furthermore $u$ is strictly monotone   increasing in the $\nu$-direction in $\Omega^\nu _{\lambda}$.
\end{proposition}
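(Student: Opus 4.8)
The plan is to run the proof of Proposition~\ref{th:simmmmmmm} essentially verbatim, the only substantial change being the step that upgrades $u\le u_{\bar\lambda}$ to the strict inequality $u<u_{\bar\lambda}$ in $\Omega_{\bar\lambda}^\nu\setminus\{0_{\bar\lambda}^\nu\}$: this was the only point where the monotonicity of $f$ and $g$ in $t$ was used, and it is precisely what assumption $(H_2)$ is meant to replace. As there I fix $\nu=e_1$ and keep the notation introduced in that proof. The initialization of the moving plane procedure is unchanged and uses only $(H_1)$: for $\lambda-a$ small the set $\overline{\Omega_\lambda}$ stays away from the pole (as $\lambda<0$), so $u\in L^\infty(\Omega_\lambda)$; the reflected function $u_\lambda$ satisfies the reversed weak inequality \eqref{eqreflecata} (whose derivation uses $|x_\lambda|\le|x|$, $g\ge0$ and \eqref{sunday}, not monotonicity); and Theorem~\ref{wcpsmall}, applied with $\widetilde D=\Omega_\lambda$ and $v=u_\lambda$, gives $u\le u_\lambda$ in $\Omega_\lambda$. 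Thus the set $\Lambda$ of \eqref{eq:LAMMMMMMMMBDA} is nonempty; put $\bar\lambda:=\sup\Lambda$ and suppose, for contradiction, $\bar\lambda<\lambda_1^0$. By continuity $w_{\bar\lambda}:=u_{\bar\lambda}-u\ge0$ in $\Sigma_{\bar\lambda}$ (in particular in $\Omega_{\bar\lambda}\setminus\{0_{\bar\lambda}\}$) and $w_{\bar\lambda}$ is odd with respect to $T_{\bar\lambda}$.

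The heart of the argument is to prove $w_{\bar\lambda}>0$ in $\Omega_{\bar\lambda}\setminus\{0_{\bar\lambda}\}$. Fix a ball $B_r(\bar x)\subset\subset\Omega_{\bar\lambda}$ with $0_{\bar\lambda}\notin B_r(\bar x)$; then $\bar x_{\bar\lambda}\in(\Omega_{\bar\lambda})'\subset\subset\Omega\setminus\{0\}$ (using $\bar\lambda<\lambda_1$ and $\bar x\ne 0_{\bar\lambda}$), and $u$ is bounded on $B_r(\bar x)$ and on $B_r(\bar x_{\bar\lambda})$. By Proposition~\ref{pr:regularity}(ii) applied on these two balls, $u$ is of class $\mathcal C^{\alpha+2s}$ near $\bar x$ and near $\bar x_{\bar\lambda}$; hence, by Remark~\ref{rm:point} and the fact that $R_{\bar\lambda}$ is an isometry (so $(-\Delta)^s u_{\bar\lambda}(y)=\big((-\Delta)^s u\big)(y_{\bar\lambda})$), both equations hold pointwise on a smaller concentric ball $B\subset B_r(\bar x)$. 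Subtracting, for $y\in B$,
\[
(-\Delta)^s w_{\bar\lambda}(y)=\Big(\tfrac{g(u_{\bar\lambda}(y))-g(u(y))}{|y_{\bar\lambda}|^{2s}}+g(u(y))\big(\tfrac{1}{|y_{\bar\lambda}|^{2s}}-\tfrac{1}{|y|^{2s}}\big)\Big)+\big(f(y_{\bar\lambda},u_{\bar\lambda}(y))-f(y_{\bar\lambda},u(y))\big)+\big(f(y_{\bar\lambda},u(y))-f(y,u(y))\big).
\]
On $B$ the term $g(u(y))\big(|y_{\bar\lambda}|^{-2s}-|y|^{-2s}\big)$ is $\ge0$ (since $|y_{\bar\lambda}|\le|y|$ and $g\ge0$) and $f(y_{\bar\lambda},u(y))-f(y,u(y))\ge0$ by \eqref{sunday}; the remaining two terms are bounded below by $-c(y)\,w_{\bar\lambda}(y)$, with $c(y):=|y_{\bar\lambda}|^{-2s}L_g(M)+L_f(M)$ and $M:=\|u\|_{L^\infty(B_r(\bar x)\cup B_r(\bar x_{\bar\lambda}))}$, by $(H_1)$ and $w_{\bar\lambda}\ge0$. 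Since $|y_{\bar\lambda}|$ is bounded away from $0$ on $B$, $c\in L^\infty(B)$, $c\ge0$, and hence $(-\Delta)^s w_{\bar\lambda}+c\,w_{\bar\lambda}\ge0$ in $B$, with $w_{\bar\lambda}\ge0$ in $\Sigma_{\bar\lambda}$ and $w_{\bar\lambda}$ odd about $T_{\bar\lambda}$. The strong maximum principle for antisymmetric functions then gives $w_{\bar\lambda}>0$ in $B$ unless $w_{\bar\lambda}\equiv0$ in $\Sigma_{\bar\lambda}$: the argument in the proof of Proposition~\ref{proconfronto} (from \cite{PhS,DMPS}) is unaffected by the bounded zero-order term, since at a possible interior zero $y_0$ of $w_{\bar\lambda}$ one has $c(y_0)w_{\bar\lambda}(y_0)=0$, so $(-\Delta)^s w_{\bar\lambda}(y_0)\le0$ by antisymmetry, forcing $(-\Delta)^s w_{\bar\lambda}(y_0)=0$ and then $w_{\bar\lambda}\equiv0$ in $\Sigma_{\bar\lambda}$. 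Running this on every admissible ball and invoking the connectedness argument of Proposition~\ref{th:simmmmmmm} (if $w_{\bar\lambda}(\bar x)=0$ at some $\bar x\in\Omega_{\bar\lambda}\setminus\{0_{\bar\lambda}\}$, pick $\sigma>0$ with $\bar x\notin B_\sigma(0_{\bar\lambda})$; then $\{w_{\bar\lambda}=0\}$ is nonempty, open and closed in $\Omega_{\bar\lambda}\setminus B_\sigma(0_{\bar\lambda})$, hence $w_{\bar\lambda}\equiv0$ there, contradicting the Dirichlet condition on $\partial\Omega$), we conclude $w_{\bar\lambda}>0$ in $\Omega_{\bar\lambda}\setminus\{0_{\bar\lambda}\}$.

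The proof then closes exactly as in Proposition~\ref{th:simmmmmmm}. Fix $\bar\varepsilon>0$ with $\bar\lambda+\bar\varepsilon<\lambda_1^0$; choose $\tau>0$ with $B_{4\tau}(0_{\bar\lambda})\subset\Omega_{\bar\lambda}$ of small measure, and a compact $K\subset\Omega_{\bar\lambda}\setminus B_{4\tau}(0_{\bar\lambda})$ so that $\Omega_{\bar\lambda+\bar\varepsilon}\setminus K$ has measure below the smallness threshold $\bar\delta$ of Theorem~\ref{wcpsmall} for $\lambda$ near $\bar\lambda$; on $K$ one has $w_{\bar\lambda}\ge\rho>0$, and by the uniform continuity of $u$ on $\overline{\Omega\setminus B_{2\tau}(0)}$ also $w_{\bar\lambda+\varepsilon}\ge\rho/2$ on $K$ for $0\le\varepsilon<\hat\varepsilon$ small. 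Taking $\widetilde D_\varepsilon:=\Omega_{\bar\lambda+\varepsilon}\setminus K$, whose measure is $\le\bar\delta$, Theorem~\ref{wcpsmall} (with $u$, $v=u_{\bar\lambda+\varepsilon}$, $\lambda=\bar\lambda+\varepsilon$) yields $u\le u_{\bar\lambda+\varepsilon}$ in $\Omega_{\bar\lambda+\varepsilon}$ for all such $\varepsilon$, contradicting $\bar\lambda=\sup\Lambda$. Hence $\bar\lambda=\lambda_1^0$, i.e. $u\le u_\lambda^\nu$ in $\Omega_\lambda^\nu$ for every $a(\nu)\le\lambda\le\lambda_1^0(\nu)$. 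Strict monotonicity follows as before: for $(t_1,x'),(t_2,x')\in\Omega_{\lambda_1^0}$ with $t_1<t_2$, the inequality just obtained with $\lambda=\tfrac{t_1+t_2}{2}$ together with the strict-positivity step applied to this $\lambda$ gives $u(t_1,x')<u(t_2,x')$.

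The main obstacle is this strict-positivity step: without monotonicity of $f$ and $g$ one cannot produce a clean weak inequality $(-\Delta)^s w_{\bar\lambda}\ge0$, so one must instead write the equation pointwise away from the pole — which is exactly what $(H_2)$ and the bootstrap of Proposition~\ref{pr:regularity} provide — and then run a strong maximum principle for antisymmetric functions in the presence of a bounded zero-order coefficient. Everything else (nonemptiness and openness of $\Lambda$ via the quantitative Theorem~\ref{wcpsmall}, the connectedness/Dirichlet argument, and the final monotonicity) is identical to the case $(a)$ of Proposition~\ref{th:simmmmmmm}.
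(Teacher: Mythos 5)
Your proposal is correct and follows essentially the same route as the paper: beyond the parts recycled from Proposition \ref{th:simmmmmmm}, the key step is the strict inequality $u<u_{\bar\lambda}$, which you, like the paper, obtain by using $(H_2)$ and Proposition \ref{pr:regularity} to write the equation pointwise away from the pole and then exploiting the antisymmetric singular-integral representation of $(-\Delta)^s(u_{\bar\lambda}-u)$ at a contact point, excluding $u\equiv u_{\bar\lambda}$ by the Dirichlet condition. The paper simply evaluates the two pointwise equations directly at the contact point $\bar x$, where $u(\bar x)=u_{\bar\lambda}(\bar x)$ makes the nonlinear differences nonnegative with no Lipschitz bookkeeping, so your detour through a differential inequality $(-\Delta)^s w_{\bar\lambda}+c\,w_{\bar\lambda}\ge 0$ on a ball and a maximum principle with bounded zero-order term (plus the redundant open/closed argument) is a harmless repackaging of the same computation.
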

\begin{proof}
As in the proof of Proposition \ref{th:simmmmmmm}, we will fix  the direction $\nu=(1,0,\ldots,0)$ without loss of generality.  We refer also to the same notations in such proof, in particular  see equations \eqref{eq:incontro0}, \eqref{eq:incontro1}, \eqref{eq:incontro11}, \eqref{bisseteeq:sn33} and \eqref{bisseteeq:sn4}.
When $\lambda >a$, since $\Omega_\lambda$ is nonempty, as in \eqref{eq:colnumero}, we define
$$(\Omega_\lambda)':=R_\lambda(\Omega_\lambda),\quad \lambda_1=\sup\{\lambda : (\Omega_\lambda)'\subset \Omega\},$$
and $\lambda_1^0\,:=\, \min \{0\,,\,\lambda_1\}\,$.

\noindent The first part of the proof relies  on the proof of Proposition \ref{th:simmmmmmm} that is, for $\lambda-a$ small, we will apply Theorem \ref{wcpsmall}. We fix $a<\hat \lambda<0$ so that
$$\|u\|_{L^\infty(\Omega_\lambda)}\leqslant \|u\|_{L^\infty(\Omega_{\hat \lambda})}\qquad\text{for}\quad \lambda<\hat\lambda\,.$$
This allows us to take $\lambda<\hat\lambda$ with $\lambda-a$ small enough such that $|\Omega_\lambda|\leqslant \delta$ where $\delta$ was given in \eqref{controlling}. Therefore, applying Theorem \ref{wcpsmall} with $\widetilde D=\Omega_\lambda$ and $v=u_\lambda$, we get that  $u\leqslant u_\lambda$ {in} $\Omega_\lambda$.

We start now the moving plane procedure setting
\[
\bar\lambda\,:=\,\sup\,\Lambda,
\]
where $\Lambda\neq\emptyset$, was given in \eqref{eq:LAMMMMMMMMBDA}. The proof of the theorem will be done if we show that $\bar\lambda=\lambda_1^0$. As in the proof of Proposition \ref{th:simmmmmmm} we argue by contradiction so we suppose that
$\bar\lambda<\lambda_1^0$. By continuity we deduce that $
u\leqslant u_{\bar\lambda}$ in $\Omega_{\bar\lambda}\setminus\{0_{\bar\lambda}\}$.
Let us show that
\begin{equation}\label{eq:aiaiaiaiiaiaiaiaiia}
u< u_{\bar\lambda}\quad \text{in}\,\,\Omega_{\bar\lambda}\setminus \{0_{\bar\lambda}\}.
\end{equation}
We point out that the case $u\equiv u_{\bar\lambda}$  in $\Omega_{\bar\lambda}\setminus \{0_{\bar\lambda}\}$ is not possible by the Dirichlet condition. \\
\noindent Therefore, to prove \eqref{eq:aiaiaiaiiaiaiaiaiia}, we assume by contradiction that there exists a point $\bar x$ in $\Omega_{\bar\lambda}\setminus \{0_{\bar\lambda}\}$ where
\begin{equation}\label{eq:contrddd}
u(\bar x)=u_{\bar \lambda}(\bar x).
\end{equation}
We fix now  $r>0$ such that $0 \notin \overline{B_r}(\bar x )$ and $0_\lambda \notin \overline{B_r}(\bar x )$. Then using  Proposition \ref{pr:regularity} we have that there exists $0<\alpha<2s$ such that
$$
\|u\|_{\mathcal{C}^{\alpha+2s}\left(\overline{B_{\frac{r}{16}}}(\bar x)\right)}\leq C\quad  \text {and } \quad \|u_\lambda\|_{\mathcal{C}^{\alpha+2s}\left(\overline{B_{\frac{r}{16}}}(\bar x)\right)}\leq C,
$$
hold, for some positive constant
$C=C\left(\lambda, f, g,\|u\|_{L^\infty(\Omega_{\hat \lambda})}\right)$.
\\As consequence, see Remark \ref{rm:point}, we can write the pointwise formulation of the problem \eqref{Eq:P} for both $u$ and $u_\lambda$ in the point $x=\bar x$. Therefore
\begin{equation}\label{eq:1w_l}
(-\Delta)^su_\lambda(\bar x)-(-\Delta)^su(\bar x)=\frac{g(u_\lambda(\bar x))}{|\bar x_\lambda|^{2s}}-\frac{g(u(\bar x))}{|\bar x|^{2s}}+f(\bar x_\lambda,u_\lambda(\bar x))-
f(\bar x, u(\bar x)).
\end{equation}
{Then, from the previous equation, if $g\equiv 0$, by \eqref{sunday} and \eqref{eq:contrddd}, we obtain that
\begin{equation}\label{eq:1w_l1}
(-\Delta)^su_\lambda(\bar x)-(-\Delta)^su(\bar x)\geq 0.
\end{equation}
It is worth noticing that, if $g\not \equiv0$, using also \eqref{sunday}, \eqref{eq:contrddd} and the fact that $|\bar x_\lambda|<|\bar x|$ for $\lambda <0$, from \eqref{eq:1w_l} it follows that
\begin{equation}\label{eq:1w_l2}
(-\Delta)^su_\lambda(\bar x)-(-\Delta)^su(\bar x)> 0.
\end{equation}}

\noindent On the other hand, by \eqref{org1}, \eqref{eq:contrddd} and the fact that the function $u_\lambda(y)-u(y)$ is odd with respect to the hyperplane $\partial \Sigma_\lambda=T_\lambda$, it follows that
\begin{eqnarray}\nonumber
(-\Delta)^su_\lambda(\bar x)-(-\Delta)^su(\bar x)&=&c_{N,s}\,{\rm P.V.}\int_{\mathbb{R}^N}\frac{u_\lambda(\bar x)-u_\lambda(y)}{|\bar x-y|^{N+2s}}dy-c_{N,s}\,{\rm P.V.}\int_{\mathbb{R}^N}\frac{u(\bar x)-u(y)}{|\bar x-y|^{N+2s}}dy\\\nonumber
&=&-c_{N,s}\,{\rm P.V.}\int_{\mathbb{R}^N}\frac{u_\lambda(y)-u(y)}{|\bar x-y|^{N+2s}}dy\\\nonumber&=&-c_{N,s}\,{\rm P.V.}\int_{\Sigma_\lambda}\frac{u_\lambda(y)-u(y)}{|\bar x-y|^{N+2s}}dy-c_{N,s}\,{\rm P.V.}\int_{\mathbb{R}^N\setminus\Sigma_\lambda}\frac{u_\lambda(y)-u(y)}{|\bar x-y|^{N+2s}}dy
\\\nonumber&=&-c_{N,s}\,{\rm P.V.}\int_{\Sigma_\lambda}\frac{u_\lambda(y)-u(y)}{|\bar x-y|^{N+2s}}dy-c_{N,s}\,{\rm P.V.}\int_{\Sigma_\lambda}\frac{u_\lambda(y_\lambda)-u(y_\lambda)}{|\bar x-y_\lambda|^{N+2s}}dy\\\nonumber
&=&-c_{N,s}\,{\rm P.V.}\int_{\Sigma_\lambda}(u_\lambda(y)-u(y))\left(\frac{1}{|\bar x-y|^{N+2s}}-\frac{1}{|\bar x-y_\lambda|^{N+2s}}\right)dy.\\\label{Auguri}
\end{eqnarray}
Since $|\bar x-y|\leq|\bar x -y_\lambda|$ for $x,y\in \Sigma_\lambda$ and $u\not \equiv u_\lambda$, $u\leq u_\lambda$, from \eqref{Auguri}, {by continuity}, we have
\begin{equation}\label{eq:giuseppestronzo}
(-\Delta)^su_\lambda(\bar x)-(-\Delta)^su(\bar x)\,<\,0,\end{equation}
a contradiction with \eqref{eq:1w_l1}{(and \eqref{eq:1w_l2}.)}
\\

 Following verbatim the second part of the  proof of Proposition \ref{th:simmmmmmm} we conclude that,
\begin{equation}\nonumber
u(x)\leq u_{\lambda}(x),\quad  x \in \Omega _{\lambda},\quad a\leq\lambda\leq \lambda_1^0\,.
\end{equation}

Let us finally prove that the solution  is strictly monotone increasing in $\Omega_{\lambda_1^0}$. For $(t_1,x')$ and $(t_2,x')$ belonging to $\Omega_{\lambda_1^0}$, with $t_1<t_2$, we have that
$u\leqslant u_{\frac{t_1+t_2}{2}}$ in $\Omega_{\frac{t_1+t_2}{2}}$.  Actually, arguing as above (see equations \eqref{eq:contrddd}-\eqref{eq:giuseppestronzo}), it follows that $u< u_{\frac{t_1+t_2}{2}}$ in $\Omega_{\frac{t_1+t_2}{2}}$. Thus we deduce that
$$u(t_1,x')<u(t_2,x')\,.$$
\end{proof}

\begin{proof}[Proof of Theorem \ref{symmetry}]
 Since by hypothesis $\Omega$ is convex w.r.t. the $\nu$-direction  and symmetric w.r.t. to
$$ T_0^\nu=\{x\in \mathbb{R}^N:x\cdot\nu=0\},$$
then  $\lambda_1(\nu)=0=\lambda_1^0(\nu)$. Therefore by Proposition \ref{th:simmmmmmm} in the case of assumptions  $(a)$ or  by Proposition \ref{th:simmmmmmmM} in the case of assumptions  $(b)$, in the statement, one has
$$u(x)\leq u_\lambda^\nu(x) \,\,  \text{ for } x\in \Omega_0^\nu.$$
In the same way, performing the moving plane method in the opposite  direction $-\nu$ we obtain
$$u(x)\geq u_\lambda^\nu(x) \,\,  \text{ for } x \in \Omega_0^\nu,$$that is, $u$ is symmetric  and non decreasing w.r.t. the  $\nu$-direction in $\Omega_0^\nu$, since monotonicity follows directly from Proposition \ref{th:simmmmmmm} and Proposition \ref{th:simmmmmmmM}.

\

\noindent Finally, if $\Omega$ is a ball, repeating this argument along any direction, it follows that $u$ is radially symmetric, i.e. $u=u(r)$ and strictly decreasing w.r.t $r$.
\end{proof}

\begin{proof}[Proof of Theorem \ref{symmetryintro}]
The proof of Theorem \ref{symmetryintro} follows by  Theorem \ref{symmetry} considering there the case $\nu=(1,0,\cdots,0)$.
\end{proof}

\noindent \bf{Acknowledgements}:
\rm{We gratefully thank Prof. Ireneo Peral for the conversations about the topic of this work and for his suggestions.   }

\end{document}